\newtheorem{theorem}{Theorem}
\newtheorem{lemma}[theorem]{Lemma}
\newtheorem{prop}[theorem]{Proposition}
\newtheorem{definition}[theorem]{Definition}
\newtheorem{conj}{Conjecture}
\theoremstyle{remark}
\newcommand{\R}{\mathbb R}
\newcommand{\C}{\mathbb C}
\newcommand{\Z}{\mathbb Z}
\newcommand{\p}{\mathfrak p}
\newcommand{\g}{\mathfrak g}
\newcommand{\Aa}{\mathfrak a}
\newcommand{\ssl}{\mathfrak{sl} }
\newcommand{\Op}{\text{Op} }
\newcommand{\OO}{\mathcal O}
\newcommand{\W}{\mathcal W}
\newcommand{\HH}{\mathbb H}
\newcommand{\Ai}{\mathcal A}
\newcommand{\Bi}{\mathcal B}
\newcommand{\Ss}{\mathcal S}
\newcommand{\Tt}{\mathcal T}
\newcommand{\Ii}{\mathcal I}
\newcommand{\Ad}{\text{Ad}}
\newcommand{\End}{\text{End}}
\newcommand{\tr}{\text{tr}}
\newcommand{\Sym}{\text{Sym}}
\newcommand{\sym}{\text{sym}}
\newcommand{\Hom}{\text{Hom}}
\newcommand{\ts}{\tilde{s} }
\begin{document}

\title{Triple Product $L$ Functions and Quantum Chaos on $SL(2,\C)$}

\author{Simon Marshall}

\maketitle

\begin{abstract}
We extend the results of Watson, which link quantum unique ergodicity on arithmetic hyperbolic surfaces with subconvexity for the triple product $L$ function, to the case of arithmetic hyperbolic three manifolds.  We work with the full unitary dual of $SL(2,\C)$, and consider QUE for automorphic forms of arbitrary fixed weight and growing spectral parameter.  We obtain our results by constructing microlocal lifts of nonspherical automorphic forms using representation theory, and quantifying the generalised triple product formula of Ichino in the case of complex places.
\end{abstract}

\section{Introduction}

If $M$ is a compact Riemannian manifold, it is a central problem in quantum chaos to understand the behvaiour of high energy Laplace eigenfunctions on $M$.  If $\{ \psi_n \}$ is a sequence of such eigenfunctions with eigenvalues $\lambda_n$ tending to $\infty$, a natural question that one may ask is whether $\psi_n$ are becoming approximately constant.  This may be asked either in a pointwise sense, by showing that certain $L^p$ norms of $\psi_n$ are small, or on average, by showing that the probability measures $\mu_n = |\psi_n(x)|^2 dv$ tend weakly to the Riemannian volume $dv$ of $M$.  There is a conjecture of Rudnick and Sarnak \cite{RS} known as the quantum unique ergodicity conjecture, or QUE, which predicts the equidistribution of $\psi_n$ in this weak-* sense when $M$ is negatively curved, and in this paper we shall be interesed in a case of this conjecture in which $M$ is an arithmetic hyperbolic three manifold.

The QUE conjecture predicts not just the equidistribution of $\mu_n$, which can be thought of as the positions of the quantum states $\psi_n$, but of a semiclassical analogue of the combined position and momentum called the microlocal lift.  This is most naturally described in terms of the correspondence between the geodesic flow on $S^*M$ and the space $L^2(M)$ with the unitary Schr\"odinger evolution.  The observables of the geodesic flow are functions $a \in C^\infty( S^*M)$, and after making a choice of quantisation scheme it is possible to associate to each classical observable $a$ a self-adjoint operator $\text{Op}(a)$, which may be thought of as a quantum observable taking the value $\langle \text{Op}(a) \psi, \psi \rangle$ on a wavefunction $\psi \in L^2(M)$.  For each fixed $\psi$ the map

\begin{equation}
\label{stdlift}
\widetilde{\mu}_\psi(a): a \mapsto \langle \text{Op}(a) \psi, \psi \rangle
\end{equation}

can be shown to be a distribution on $C^\infty( S^*M)$, and this is defined to be the microlocal lift of $\psi$.  This construction is due to \v{S}nirel'man \cite{Sn}, who also proved that any high energy weak limit of the $\widetilde{\mu}_\psi$ is a measure invariant under the geodesic flow.  The microlocal form of the QUE conjecture then predicts that the only limit of $\{ \widetilde{\mu}_n \}$ is Liouville measure, or that all derivatives of $\psi_n$ are behaving randomly.

The fact that weak limits of the $\widetilde{\mu}_n$ are flow invariant measures makes it possible to apply ergodic techniques to the QUE conjecture, which has lead to an essentially complete solution in the case of arithmetic quotients of $\HH^2$ and $( \HH^2 )^n$ by Lindenstrauss \cite{Li1, Li2} (with contributions by Soundararajan \cite{So1} to deal with the noncompact case), and compact quotients of $GL(n,\R)$ for $n$ prime by Silberman and Venkatesh \cite{SV, SV2}.  In the case of arithmetic quotients of $\HH^2$, a second approach based on the triple product $L$ function was developed by Watson \cite{W}.  To give an illustration of his results, let $X = SL(2,\Z) \backslash \HH^2$ and let $\phi_i$ be three $L^2$ normalised Hecke-Maass cusp forms on $X$ with associated representations $\pi_i$.  Watson then proves the beautiful identity

\begin{equation}
\label{watson}
\left| \int_X \phi_1 \phi_2 \phi_3 dv \right|^2 = \frac{1}{8} \frac{ \Lambda( 1/2, \pi_1 \otimes \pi_2 \otimes \pi_3 ) }{ \prod \Lambda ( 1, \Ad \pi_i ) }.
\end{equation}

A consequence of this formula is that the coarse form of the QUE conjecture would be implied by a subconvex bound for the triple product $L$ function in the eigenvalue aspect, and similar formulae for vectors of higher weight in $\pi_i$ would allow one to deduce the full microlocal version.  The purpose of this paper is to prove the same implication in $\HH^3$ for a standard collection of manifolds $Y$ called the Bianchi manifolds.

As we are considering the QUE conjecture from the point of view of automorphic forms, it is natural for us to work not only with Laplace eigenfunctions and their associated spherical representations but with the full unitary dual of $SL(2,\C)$.  This dual is indexed by a weight $k \in \Z$ and and a spectral parameter $r \in \R$ (ignoring the complimentary series), and for each fixed $k$ we shall study the quantum limits of automorphic forms of weight $k$ and growing spctral parameter.  To describe our approach to this, let $\{ \pi_n \}$ be a sequence of automorphic representations of this type.  We may associate to $\{ \pi_n \}$ a sequence of sections $\{ \psi_n \}$ of a principal $SU(2)$ bundle over $Y = \Gamma \backslash SL(2,\C) / SU(2)$, which are the objects whose high energy behaviour we shall study.  We first construct microlocal lifts of $\psi_n$ in terms of the representations $\pi_n$, following eariler constructions of Zelditch \cite{Ze1, Ze2}, Lindenstrauss \cite{Li2}, and Silberman and Venkatesh \cite{SV}, and provide heuristics to illustrate what the generalisation of the QUE conjecture to these vector valued objects should be.  Our construction has the novel feature that we are applying it to nonspherical representations, and as a result of this and the nonabelianness of $SU(2)$ we find that there is a richer set of expected quantum limits than in the case of functions.  For instance, when the QUE conjecture for $\psi_n$ is interpreted in terms of differential forms on $Y$ it predicts different quantum limits for exact and coclosed 1-forms.  Once we have defined a lift in terms of automorphic forms we may then test its convergence to the expected limit by integration, using the general $GL_2$ triple product formula of Ichino \cite{I}.  The expected equivalence between QUE and subconvexity follows from Ichino's formula once once we have made it sufficiently quantitative, which requires the estimation of certain Archimedean local integrals using the Whittaker function formulas of Jacquet-Langlands \cite{JL} and a formula appearing in a paper of Michel and Venkatesh \cite{MV}.

The structure of the paper is as follows.  We introduce our notation in section \ref{notation}, before giving precise statements of our results in section \ref{definition}.  We establish the basic properties of our microlocal lift in sections \ref{limits} and \ref{agreement0}, and establish the relationship between QUE and subconvexity in sections \ref{tripleprod} and \ref{archimedean}.  We conclude in section \ref{modQUE} by giving an interpretation of the vector-valued QUE conjecture in terms of differential forms on $Y$.

{\bf Acknowledgements}: We would like to thank our adviser Peter Sarnak for suggesting this problem as part of our thesis, and providing much guidance and encouragement in the course of our work.

\section{Notation}
\label{notation}

Let $F$ be an imaginary quadratic field, which we assume for simplicity to have class number one, with ring of integers $\OO$.  Let $G = SL(2,\C)$ and $K = SU(2)$, let $\Gamma \subset G$ be the projection of $GL(2,\OO)$ to $G$ by central twisting, and let $Y = \Gamma \backslash G / K$ be a Bianchi manifold.  Note that $Y$ is also equal to $GL(2,\OO) Z \backslash GL(2,\C) / K$.  We begin by establishing notation for representations of $K$ and $G$.  Let $\rho_m$ denote the irreducible $m+1$ dimensional representation of $K$ with Hermitian inner product $\langle \: , \: \rangle$, and let $\cdot^*$ denote the associated conjugate linear isomorphism between $\rho_m$ and $\rho^*_m$.  We choose an orthonormal basis $\{ v_t \}$ ($t = m, m-2, \ldots, -m$) for $\rho_m$ and dual basis $\{ v_t^* \}$ for $\rho^*_m$, consisting of eigenvectors of $M$ satisfying

\begin{equation*}
\left( \begin{array}{cc} e^{i\theta} & 0 \\ 0 & e^{-i\theta} \end{array} \right) v_t = e^{i t\theta} v_t, \quad \left( \begin{array}{cc} e^{i\theta} & 0 \\ 0 & e^{-i\theta} \end{array} \right) v_t^* = e^{-i t\theta} v_t^*.
\end{equation*}

Let

\begin{eqnarray*}
H = \left( \begin{array}{cc} 1 & 0 \\ 0 & -1 \end{array} \right), \quad X_+ = \left( \begin{array}{cc} 0 & 1 \\ 0 & 0 \end{array} \right), \quad X_- = \left( \begin{array}{cc} 0 & 0 \\ 1 & 0 \end{array} \right), 
\end{eqnarray*}

$T = iH$, $Y_+ = iX_+$, $Y_- = -i X_-$ be a basis for the real lie algebra $\ssl(2,\C)$, and let $H^*, X_+^*$ etc. denote the elements of the dual basis.  Let $\mathfrak{h} = \langle H, T \rangle$ be the Cartan subalgebra of $\ssl(2,\C)$.  The Casimir operator is given by 

\begin{equation*}
4C = H^2 - T^2 + 2X_+X_- + 2X_-X_+ + 2Y_+Y_- + 2Y_-Y_+.
\end{equation*}

If $r \in \C$ and $k \in \Z$, let $\lambda = 2ir H^* + ik T^* \in \mathfrak{h}^*$ and let $I_\lambda$ be the representation of $SL(2,\C)$ unitarily induced from the character

\begin{equation*}
\chi: \left( \begin{array}{cc} z & x \\ 0 & z^{-1} \end{array} \right) \mapsto (z / |z| )^k |z|^{2ir}.
\end{equation*}

These are unitarisable for $(k,r)$ in the set

\begin{equation*}
U = \{ (k,r) | r \in \R \} \cup \{ (k,r) | k = 0, r \in i(-1,1) \},
\end{equation*}

and two such representations $I_\lambda$, $I_\mu$ are equivalent iff $\lambda = \pm \mu$.  Furthermore, these are all the irreducible unitary representations of $SL(2,\C)$ other then the trivial representation.  We choose a set $U' \subset U$ representing every equivalence class in $U$ to be

\begin{equation*}
U' = \{ (k,r) | r \in (0,\infty) \} \cup \{ (k,r) | r = 0, k \ge 0 \} \cup \{ (k,r) | k = 0, r \in i(0,1) \}.
\end{equation*}

Given $\pi \in \widehat{G}$ nontrivial, we shall say $\pi$ has weight $k$ and spectral parameter $r$ if it is isomorphic to $I_\lambda$ with $\lambda = 2ir H^* + ik T^*$, $(k,r) \in U'$.  For fixed $k$, all representations of weight $k$ may be realised on the space

\begin{equation*}
W_k = \{ f \in L^2(K) | f(mg) = \chi_k(m) f(g), m \in M \},
\end{equation*}

where in practice we shall assume the weight to be understood and denote this space by $W$, with $W_K$ denoting the subspace of $K$-finite vectors.  Let $T$ be the conjugate linear mapping $W \rightarrow W^*$, $T: f \mapsto \langle \cdot ,f \rangle$.  If $|k| \le m$, $I_\lambda$ will contain $\rho_m$ (or $\rho_m^*$, as they are isomorphic) as a $K$-type with multiplicity one, and we choose explicit unitary embeddings $\rho_m \rightarrow W$ and $\rho_m^* \rightarrow W$ using our choice of basis by

\begin{eqnarray}
v & \mapsto & (m+1)^{1/2} \langle \rho_m(k) v, v_m \rangle, \\
v^* & \mapsto & (m+1)^{1/2} \langle \rho_m^*(k) v^*, v_{-m}^* \rangle, \\
\label{psi}
\psi_j(k)^* & = & \langle \rho_m^*(k) v_j^*, v_{-m}^* \rangle.
\end{eqnarray}

If $v \in W$ we shall often think of $v$ as a vector in all representations $I_\lambda$ of weight $k$ simultaneously, as for $v$ and $v^*$ under this embedding.

Fix an $m$ and let $\rho = \rho_m$; we shall define the correspondence between sections of $X \times_K \rho$ and automorphic forms.  Recall that for a representation $\tau$ of $K$, the principal bundle $X \times_K \tau$ is the quotient of $X \times \tau$ by the right $K$-action

\begin{equation}
\label{prin}
(x,v)k = (xk, \rho(k)^{-1} v),
\end{equation}

so that sections of $X \times_K \tau$ may be thought of as sections of $X \times \tau$ satisfying

\begin{equation*}
\rho(k) v(xk) = v(x).
\end{equation*}

Let $\sigma = \rho \otimes \rho^*$ and define the bundles $B = X \times_K \rho$ and $E = \End(B) = X \times_K \sigma$, with the Hermitian structures coming from the one on $\rho$.  There is an equivalence between square integrable sections $s \in L^2(Y,B)$  of $B$ and $K$-homomorphisms $\rho^* \rightarrow L^2(X)$ via the map

\begin{equation}
\label{section}
s \mapsto ( v \mapsto ( s(x), v) ),
\end{equation}

and so the decomposition of $L^2(X)$ as a direct integral of automorphic representations induces one of $\Hom_K( \rho^*, L^2(X))$ and $L^2(Y,B)$.  Elements of $L^2(Y,B)$ corresponding to the discrete spectrum will be called automorphic sections, and these are the analogues of Laplace eigenfunctions for which our lift will be defined (in particular we ignore the continuous spectrum of $X$).  If $\pi \subset L^2(X)$ occurs discretely and $R_\pi$ is the unitary embedding $W \rightarrow L^2(X)$ associated to $\pi$, the definition of the section $s$ associated to $\pi$ by (\ref{section}) may be unwound to give

\begin{equation*}
s = (m+1)^{1/2}\sum_{i=0}^m R_\pi( \psi_{m-2i}^* ) v_{m-2i}.
\end{equation*}

We will $L^2$ normalise this, so that our definition of the section $s$ associated to a representation $\pi$ is

\begin{equation}
\label{section2}
s = \sum_{i=0}^m R_\pi( \psi_{m-2i}^* ) v_{m-2i}.
\end{equation}

There is a natural identification of $X$ with the orthonormal frame bundle of $Y$ and of $S^*Y$ with $X / M$, and so if $\pi : S^*Y \rightarrow Y$ is the projection this induces isomorphisms of $\pi^*(B)$ and $\pi^*(E)$ with $X \times_M \rho$ and $X \times_M \sigma$, which we shall implicity make use of throughout the paper.

\section{Statement of Results}
\label{definition}

Fix a representation $\rho = \rho_m$ and a weight $k$ with $|k| \le m$, and consider an automorphic section $s \in L^2(Y,B)$ of $B = X \times_K \rho$ associated to a representation $\pi$ of weight $k$.  Our first result is the construction of a microlocal lift of $s$ in terms of $\pi$.  Explicit lifts of this kind have already been constructed for functions on an arbitrary locally symmetric space $Y = \Gamma \backslash G / K$, which was first carried out by Zelditch \cite{Ze1, Ze2} for $G = SL(2,\R)$, before being extended to $SL(2,\R)^n$ by Lindenstrauss \cite{Li2} and to arbitrary semisimple Lie groups by Silberman and Venkatesh \cite{SV}.  These require the function to be an eigenfunction of the full ring of invariant differential operators on $Y$ rather than just the Laplacian, and as a result produce lifts whose weak limits are invariant under a maximal $\R$-split torus of $G$ rather than just the geodesic flow.  They are most naturally thought of as distributions on $C^\infty_0(\Gamma \backslash G)$, but the standard lift may be recovered from them in the large eigenvalue limit via a correspondence between $\Gamma \backslash G$ and $S^*Y$.

For $s \in L^2(Y,B)$, the correct generalisation of definition (\ref{stdlift}) is obtained by replacing $\Op(a)$ with a pseudodifferential endomorphism of $B$ (which we shall require to be compactly supported in order to deal with the noncompactness of $Y$), so the lift $\nu_s$ should be a distribution on the corresponding space of symbols which is $C^\infty_0( S^*Y, \pi^*(E) )$.  We will define $\nu_s$ coordinatewise using the distribution $\mu_\pi (f, \Phi)$ introduced by Silberman and Venkatesh in \cite{SV}, whose definition we now recall.  If $\pi$ is the automorphic representation associated to $s$, and $f \in W_K$ and $\Phi \in W_K'$, define the functional $\mu_\pi (f, \Phi)$ on $C^\infty_{0,K}(X)$ by the rule

\begin{equation}
\label{akshaydef}
\mu_\pi( f, \Phi)(g) = \Phi \circ R_\pi^{-1} \circ P( R_\pi(f) \cdot g ),
\end{equation}

where $P: L^2(X) \rightarrow R_\pi(W)$ is the orthogonal projection, and $R_\pi(f) \cdot g$ denotes pointwise multiplication of functions on $X$.  We may now state our definition.

\begin{definition}
\label{liftdef}

Suppose $s \in L^2(Y,B)$ is an automorphic section with associated representation $\pi$.  If $f \in C^\infty_K( M \backslash K )$ and $\Phi \in W_K'$ we define the microlocal lift $\nu_s(f, \Phi)$ to be the element

\begin{equation}
\label{liftdef1}
\nu_s(f, \Phi) = \sum_{i=0}^m \mu_\pi( f \cdot \psi_{m - 2i}^*, \Phi) v_{-k}^* \otimes v_{m-2i} 
\end{equation}

of $C^\infty_{0,K} ( X, \sigma )'$, which may also be thought of as being in $C^\infty_0( S^*Y, \pi^*(E) )'$.  Here $\psi^*_i$ are as in (\ref{psi}).

\end{definition}

We then define the lift $\nu_s$ to be $\nu_s(1,\delta)$, where $\delta$ is the delta distribution at the identity in $K$.  Its properties are summarised in the following proposition.

\begin{prop}
\label{liftprops}
Let $\{ s_n \}$ be a sequence of automorphic sections of $B$ with fixed weight $k$ and spectral paramter tending to $\infty$.  Then, after replacing $\{ s_n \}$ by an appropriate subsequence and denoting $\nu_{s_n}$ by $\nu_n$, there exist sections $\ts^1_n$ and $\ts^2_n$ in $L^2(S^*Y, \pi^*(B) )$ such that

\begin{enumerate}
\item The projection of $\nu_n$ to $Y$ coincides with the element $s_n^* \otimes s_n$ of $C^\infty_0( Y, E)'$.

\item For every $f \in C^\infty_0( S^*Y, \pi^*(E))$ we have $\underset{n \rightarrow \infty}{\lim} ( \nu_n(f) - \ts^{1 \, *}_n \otimes \ts^2_n (f) ) = 0$.

\item Every weak-* limit of the measures $\ts^{1 \, *}_n \otimes \ts^2_n$ is $A$-invariant.

\item $\langle \Op(a) s_n, s_n \rangle = \nu_n(a) + o(1)$ for all $a \in C^\infty_0( S^*Y, \pi^*(E))$.

\item Let $T \subset \End_G ( C^\infty( X, \rho ) )$ be a $\C$ subalgebra of bounded automorphisms of $C^\infty( X \times \rho )$ commuting with the $G$ action and with the right action of $K$ on $X \times \rho$.  Then each $t \in T$ induces an automorphism of $C^\infty(Y,B)$, and we may suppose that $s_n$ is an eigenfunction of $T$.  Then we may choose $\ts^1_n$ and $\ts^2_n$ to be eigenfunctions with the same eigenvalues as $s_n$.

\end{enumerate}

\end{prop}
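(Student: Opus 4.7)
The first property should follow from essentially formal manipulation of the definition. The projection of $\nu_n$ to $Y$ is obtained by pairing with test functions pulled back from $Y$, i.e.\ those invariant under the right $M$-action on $X$ (and, after further averaging, under $K$). Unwinding (\ref{liftdef1}) with $f = 1$ and $\Phi = \delta$, plugging in the definition (\ref{akshaydef}) of $\mu_\pi$, and applying Schur orthogonality to the $K$-integration against the matrix coefficients $\psi_{m-2i}^*$, one recovers exactly the coefficients appearing in (\ref{section2}) for $s_n^* \otimes s_n$.

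For Property 2, the plan is to adapt the construction of Silberman--Venkatesh \cite{SV} to our vector-valued setting. The distribution $\mu_\pi(f,\Phi)$ is bilinear in $(f,\Phi)$, and the goal is to show that in the large spectral parameter limit it factors, up to a vanishing error, as a product of two matrix coefficients $R_{\pi_n}(w_1)^*$ and $R_{\pi_n}(w_2)$, with $w_1, w_2 \in W$ depending on $(f,\Phi)$. The mechanism is that after decomposing $f$ and $\Phi$ into $K$-isotypic pieces, the only surviving diagonal contributions in the projection $P$ of (\ref{akshaydef}) are indexed by matching $K$-types, and the off-diagonal terms pick up oscillating phases under the $A$-action on $W$ that decay as $r \to \infty$. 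Defining $\ts^1_n, \ts^2_n$ by the corresponding sums of matrix coefficients, weighted as in (\ref{liftdef1}), produces the required sections, and one extracts a convergent subsequence by standard weak compactness.

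Property 3 is then a consequence of Property 2 together with the approximate $A$-equivariance of $\mu_\pi$: translating a symbol by $a \in A$ corresponds to translating the arguments of $R_\pi$, which on $W$ acts through the principal series character; after normalization the resulting phases cancel in the limit, so any weak-$*$ limit is $A$-invariant. Property 4 is the identification of $\nu_n$ with the standard microlocal lift in the sense of (\ref{stdlift}), which one verifies by writing $\Op(a)$ as an integral against its symbol, comparing with (\ref{akshaydef}) and (\ref{section2}), and checking that the leading order matches while lower-order terms are $o(1)$ as the spectral parameter grows. Property 5 is automatic: any $t \in T$ commutes with the $G$-action and hence with $R_{\pi_n}$, so every matrix coefficient appearing in $\ts^1_n, \ts^2_n$ inherits the eigenvalue of $s_n$.

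The main obstacle is Property 2, since it encapsulates all the representation-theoretic and analytic content: proving that the off-diagonal $K$-type contributions truly die in the limit, and that the diagonal contributions assemble into a product of matrix coefficients. All subsequent properties either reduce to Property 2 or to formal compatibilities of the definitions. The extra difficulty relative to \cite{SV} comes from the nonspherical setting and the noncommutativity of $K = SU(2)$, which means the $K$-type bookkeeping in (\ref{liftdef1}) must be tracked carefully when transferring the construction to the weight-$k$ space $W_k$.
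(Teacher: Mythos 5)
There is a genuine gap, concentrated in your treatment of Properties 2 and 3, which are the analytic heart of the proposition. For (2) you never confront the fact that $\nu_n=\nu_n(1,\delta)$ has the distributional argument $\delta$: at finite $n$ the lift is, by (\ref{nusimple}), of the form $\overline{R_{\pi_n}(\delta)}\,v_{-k}^*\otimes s_n$, which is \emph{not} a tensor product of two $L^2$ sections, and your proposed mechanism --- that off-diagonal $K$-isotypic contributions in the projection $P$ of (\ref{akshaydef}) ``pick up oscillating phases under the $A$-action that decay as $r\to\infty$'' --- does not correspond to any step that can be carried out here ($P$ is the projection onto $R_\pi(W)$, not a $K$-isotypic projection, and no such diagonal/off-diagonal factorization of $\mu_\pi(f,\Phi)$ is available or needed). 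What is actually required is to approximate $\delta$ by honest $L^2$ vectors and then split the approximation between the two arguments: one takes $p\in W_K$ with $p(e)=1$ and an $L^2$-bounded sequence $f_j\in C^\infty_K(M\backslash K)$ with $|f_j|^2$ tending to the delta measure, uses the norm bound of Lemma \ref{bound} to get a uniformly bounded family of $\sigma^*$-valued measures, and invokes the asymptotic shifting identity $\nu^T_\infty(f_1,f\cdot f_2)=\nu^T_\infty(f_1\overline f,f_2)$ (Lemma \ref{parts}, in quantitative form, from \cite{SV}) to replace $\nu_n(1,\delta)$ by $\nu_n^T(f_{j_n},f_{j_n}p)=\ts_n^{1\,*}\otimes\ts_n^2$ along a diagonal subsequence, with $\ts_n^1=R_{\pi_n}(f_{j_n}p)v_{-k}$ and $\ts_n^2=\sum_i R_{\pi_n}(f_{j_n}\psi^*_{m-2i})v_{m-2i}$. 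Your proposal leaves $\ts_n^1,\ts_n^2$ essentially undefined and supplies no substitute for this delta-sequence/shifting argument, so (2) is unproved as written.

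For (3), the claim that translation by $A$ ``acts through the principal series character'' on $W$ so that ``phases cancel after normalization'' is not correct for general $K$-finite vectors in the induced model (the $A$-action involves the full Iwasawa decomposition, not a scalar character), and in any case gives no quantitative statement. The actual proof is differential: starting from the Casimir eigenvalue equation (\ref{casimir}) and the identity (\ref{delta}) for the action of $\Aa\oplus\mathfrak{n}$ on $\delta$ in the dual representation, one derives the exact identity $H\mu_n(\psi^*_j,\delta)=\tfrac{1}{r_n}\mu'_{j,n}$ with $\{\mu'_{j,n}\}$ weakly convergent, whence $H\nu_\infty=0$; some identity with an explicit $1/r_n$ gain of this kind is indispensable. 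Your sketches of (1), (4) and (5) are in the right spirit --- for (4) one should realize a dense family of symbols by Laplacian-homogenized differential operators and use an asymptotic symbol lemma such as Lemma \ref{differentiate} before concluding by density --- but as it stands the two central properties rest on mechanisms that would not go through.
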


The proof of this proposition is contained in sections \ref{limits} and \ref{agreement0}, and is valid for any finite volume hyperbolic 3-manifold with the definition of automorphic section relaxed to mean one associated to a representation of $SL(2,\C)$ rather than the full Adele group.  It is similar to the analogous result of Silberman and Venkatesh in \cite{SV} and we shall follow their method of proof closely, occasionally referring the reader to their paper when our proof of a proposition is sufficiently similar to theirs.  As $\ts^{1 \, *}_n \otimes \ts^2_n \in L^1( S^*Y, \pi^*( E^* ) )$, (2) implies that any weak limits of $\{ \nu_n \}$ are measures, and (4) is the same as saying that $\nu_s$ has the characteristic property of the standard lift in the large eigenvalue limit.  The equivariance property (5) will not be relevant for us but we state it anyway; its proof is identical to that of the analogous statement in \cite{SV}, and we will not reproduce it here.  

The most natural question one may ask about the vector valued lifts we have defined is what their high energy limits should be, and simple heuristics given in section \ref{heuristics} lead us to conjecture the following answer in generalisation of QUE for functions.

\begin{conj}
\label{nonsphQUE}
If $\{s_n\}$ is a sequence of $L^2$ normalised automorphic sections of $X \times_K \rho_m$ of weight $k$, the unique quantum limit of their microlocal lifts $\nu_n$ is $v_{-k}^* \otimes v_{-k} dx / \text{Vol}(X)$.
\end{conj}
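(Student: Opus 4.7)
The statement is a conjecture; the paper's plan (as announced in the introduction) is not to prove it unconditionally but to reduce it to subconvexity for the triple product $L$-function, in exact analogy with Watson's identity (\ref{watson}) in the $\HH^2$ setting. By Proposition \ref{liftprops}, any weak-$*$ limit of $\nu_n$ is a measure on $S^*Y$ valued in $\pi^*(E)^*$, and by (4) it agrees with the standard symbol pairing in the large-eigenvalue limit. To identify the limit it suffices to compute $\nu_n(\phi \cdot \eta)$ on a dense family, where $\phi$ is an automorphic form on $X$ generating an irreducible representation $\sigma$ and $\eta \in \sigma = \rho \otimes \rho^*$ has a prescribed $M$-weight. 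Unwinding Definition \ref{liftdef} and (\ref{akshaydef}), this pairing becomes a finite linear combination of triple integrals
\begin{equation*}
\int_X R_{\pi_n}(\psi^*_{m-2i}) \cdot \overline{R_{\pi_n}(\psi^*_{m-2j})} \cdot \phi\, dx,
\end{equation*}
the coefficients being fixed tensor pairings with $\eta$.

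The core of the reduction is Ichino's triple product formula. Applied to the square modulus of such an integral, it identifies it with
\begin{equation*}
\frac{\Lambda(1/2, \widetilde{\pi}_n \otimes \pi_n \otimes \sigma)}{\prod_i \Lambda(1, \Ad\, \pi_i)}
\end{equation*}
times a finite product of normalised local matrix coefficient integrals. The archimedean integral at the complex place must be computed explicitly at the relevant weight vectors, which is the task of section \ref{archimedean}; it uses the Whittaker function formulas of Jacquet--Langlands together with the identity of Michel--Venkatesh, producing an explicit power of $r_n$. Granting a spectral-aspect subconvex bound for $\Lambda(1/2, \widetilde{\pi}_n \otimes \pi_n \otimes \sigma)$, this archimedean growth is strictly beaten, and every pairing with $\sigma$ nontrivial contributes $o(1)$.

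Only the contribution of the trivial representation of $\sigma$ survives, giving a constant limit measure on $S^*Y$. Its fiberwise tensor and total mass are pinned down by direct evaluation of (\ref{liftdef1}) at $f = 1$, $\Phi = \delta$ using the explicit form (\ref{psi}) of the weight vectors, together with the $L^2$-normalisation (\ref{section2}) of $s_n$. A careful tracking of the isomorphisms $\rho^* \otimes \rho \simeq \sigma^*$ and the $M$-weight decomposition selects the single tensor $v_{-k}^* \otimes v_{-k}$ with total mass $1/\text{Vol}(X)$.

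The principal obstacle is the subconvex bound itself, which is currently open in the required uniformity; without it the conjecture cannot be deduced, and the paper's contribution is precisely to make this reduction quantitative. A secondary technical challenge, taken up in section \ref{archimedean}, is evaluating Ichino's archimedean integral at the complex place at arbitrary fixed weight $k$, rather than only in the spherical case $k = 0$; this is what the Whittaker function calculations of Jacquet--Langlands and the Michel--Venkatesh formula are needed for.
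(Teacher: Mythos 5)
Your proposal matches the paper: the statement is indeed left as a conjecture, motivated only heuristically (section \ref{heuristics}) by expanding $\delta$ into $K$-types and arguing that the sole surviving term is the one with $l=m$, $m-2i=-k$, giving the limit $v_{-k}^*\otimes v_{-k}\,dx/\text{Vol}(X)$, while the paper's actual content is the conditional reduction of Theorem \ref{main} via Weyl's criterion, Ichino's formula, and the archimedean computations of section \ref{archimedean} using the Jacquet--Langlands Whittaker formulas and the Michel--Venkatesh identity, exactly as you describe. The one refinement worth noting is that the continuous spectrum is treated separately: against unitary Eisenstein series the paper does not invoke Ichino's (cuspidal) formula but unfolds the Rankin--Selberg integral directly, which is the source of the second bound (\ref{ldecaychi}) for $GL_1$-twists, a case your blanket appeal to Ichino glosses over.
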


Our main theorem provides support for this conjecture, by relating it to a subconvex bound for a triple product $L$ function in the eigenvalue aspect.

\begin{theorem}
\label{main}
Let $\pi_n$ be the automorphic representations associated to $s_n$, with spectral parameters $r_n$.  Conjecture \ref{nonsphQUE} is equivalent to the asymptotics

\begin{eqnarray}
\label{ldecaypi}
\frac{ L( 1/2, \pi_n \otimes \pi_n \otimes \pi' ) }{ L( 1, \Sym^2 \pi_n )^2 } & = & o( r_n^2 ) \\
\label{ldecaychi}
\text{and} \quad \frac{ L( 1/2, \pi_n \otimes \pi_n \otimes \chi ) }{ L( 1, \Sym^2 \pi_n ) } & = & o( r_n )
\end{eqnarray}

for all automorphic representations $\pi'$ of $GL_2$ and $\chi$ of $GL_1$ which are unramified at all finite places.
\end{theorem}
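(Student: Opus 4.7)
The plan is to reduce Conjecture \ref{nonsphQUE} to the assertion that $\nu_n(\Phi) \to 0$ for every fixed test function $\Phi$ lying in a non-constant automorphic component of $L^2(X)$, and then to apply Ichino's triple product formula to each such test so as to convert the vanishing into a statement about triple product $L$-functions whose archimedean local factor carries exactly the polynomial in $r_n$ that distinguishes (\ref{ldecaypi}) from (\ref{ldecaychi}). By the projection and equivariance assertions of Proposition \ref{liftprops} it suffices to evaluate the coordinate distributions $\mu_{\pi_n}(f \cdot \psi_{m-2i}^*, \Phi)$ appearing in (\ref{liftdef1}) against functions $g \in C^\infty_{0,K}(X)$, and to check that each tends to the prescribed multiple of Liouville measure. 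Using the spectral decomposition of $L^2(X)$, I would reduce to the case where $g$ is a $K$-finite vector lying in a single automorphic representation $\pi'$ (cuspidal, or an incomplete Eisenstein series attached to a Hecke character $\chi$) orthogonal to the constants; the constant part is exactly what produces the conjectured main term, so only the orthogonal components need to vanish.

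Next I would unwind the definition (\ref{akshaydef}). The quantity $\mu_{\pi_n}(f \cdot \psi_{m-2i}^*, \Phi)(g)$ is a matrix coefficient of the projection onto $R_{\pi_n}(W)$ of the pointwise product $R_{\pi_n}(f \cdot \psi_{m-2i}^*) \cdot g$, which after unfolding is a global trilinear period $\int_X \phi_1 \phi_2 \phi_3$ with $\phi_1, \phi_2$ vectors in $\pi_n, \overline{\pi}_n$ and $\phi_3$ a vector of $\pi'$ (respectively the Eisenstein vector attached to $\chi$). Applying Ichino's general $GL_2$ triple product formula \cite{I} to the absolute square of this period, and normalising by the inner products of the inputs, expresses it as the ratio $L(1/2, \pi_n \otimes \pi_n \otimes \pi') / L(1,\Ad \pi_n)^2 L(1,\Ad \pi')$, multiplied by a finite product of normalised local trilinear integrals. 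At the finite places these local integrals are $n$-independent, since $\pi_n$ has bounded level and every test vector is $K$-finite of bounded type, so they may be absorbed into an absolute constant depending only on $F$, $m$, $k$, and $\pi'$ (or $\chi$).

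The principal step, and the main obstacle, is quantifying the archimedean local integral at the unique complex place as $r_n \to \infty$. It is a trilinear matrix coefficient integral on $SL(2,\C)$ involving two Whittaker functions attached to $\pi_n$ in the $K$-types $v_{m-2i} \in \rho_m$ specified by (\ref{section2}), together with a Whittaker function of $\pi'$ (resp.\ $\chi$). I would evaluate it by inserting the explicit complex Whittaker function formulas of Jacquet-Langlands \cite{JL} and applying the trilinear asymptotic computation carried out in Michel-Venkatesh \cite{MV}, obtaining an asymptotic of size $r_n^{-2}$ in the $GL_2$ case and $r_n^{-1}$ in the $GL_1$ case. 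Combined with Ichino's formula and the identity $L(1,\Sym^2 \pi_n) \asymp L(1,\Ad\pi_n)$, the vanishing of $\mu_{\pi_n}(\cdots)(g)$ translates precisely into the bounds (\ref{ldecaypi}) and (\ref{ldecaychi}); since such tests are dense in the space of observables needed by Proposition \ref{liftprops}, the equivalence follows.

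The delicate part is to carry out the archimedean computation uniformly in the internal index $i$ in (\ref{liftdef1}) and in the fixed weight $k$, keeping the dependence on these discrete parameters absolute while isolating the sharp power of $r_n$. The two different powers $\alpha = 2,1$ in the two cases arise from the change in the effective stationary phase regime as the $GL_2$ test $\pi'$ is replaced by a one-dimensional $\chi$, and tracking this carefully is what converts the equivalence from a one-sided implication into a genuine equivalence.
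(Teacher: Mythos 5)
Your overall strategy is the paper's: test $\nu_n$ against constants, cusp forms and Eisenstein series via Weyl's criterion, apply Ichino's formula to the cuspidal tests, and quantify the complex-place local factor with the Jacquet--Langlands Whittaker formulas and the Michel--Venkatesh input. But two points need correction. First, the Eisenstein tests: the version of Ichino's formula used here is for cuspidal triples, and the paper instead unfolds $\nu_{n,i}(E(s,\cdot))$ Rankin--Selberg style, which produces the period itself (not its square) times the Whittaker integral $\Tt$; this is exactly why (\ref{ldecaychi}) is linear in the $L$-value and carries $o(r_n)$ while (\ref{ldecaypi}) carries $o(r_n^2)$. Your explanation of the two powers as coming from ``a change in the effective stationary phase regime'' between $GL_2$ and $GL_1$ tests is not correct: by Proposition \ref{MVenkatesh} the cuspidal matrix-coefficient integral satisfies $\Ss = |\Tt|^2$, so one and the same archimedean quantity $\Tt \asymp r_n^{-1}$ governs both cases; the exponent discrepancy comes from the squared versus linear occurrence of the period (equivalently, analytic conductor $r_n^8$ versus $r_n^4$), not from two distinct archimedean asymptotics. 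If you set up two independent stationary-phase computations expecting intrinsically different decay rates, the bookkeeping will mislead you, even though the final numerology happens to agree.

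Second, and this is the genuine gap, the direction ``Conjecture \ref{nonsphQUE} $\Rightarrow$ (\ref{ldecaypi})--(\ref{ldecaychi})'' requires a \emph{lower} bound $\Tt \gg r_n^{-1}$ (equivalently $\Ss \gg r_n^{-2}$) for at least one admissible choice of coordinate $i$ and test vector $u$, and you give no mechanism for it beyond saying it must be tracked. In the paper this is the delicate step: inserting (\ref{whittakerV}) and the Bessel integral (\ref{bessel}), each term $(p,q)$ contributes a power $r_n^{\,p-q-w_1/2-m/2-1}$ by Stirling, the exponent attains its maximum $-1$ only when $w_1=-k$, $(p,q)=((m-k)/2,0)$, and one must take $u$ to be the minimal $K$-type vector of $M$-weight $w_2=0$ (so that $V_2$ is a single Bessel term and the constraint $w_1-w_2+k=0$ forced by the $K$-integration against $\delta_N$ can be met) in order to see that this unique extremal term survives without cancellation. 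Absent such a test-vector nonvanishing argument, your proposal proves only the implication from the subconvex decay to QUE, not the stated equivalence; you should also say a word about replacing the distribution $\delta$ by its truncation $\delta_N$ so that the trilinear periods you feed into Ichino and the unfolding are honest integrals.
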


As the analytic conductors of the $L$ functions occurring here are $r_n^8$ and $r_n^4$ (ignoring other parameters), the bounds of theorem \ref{main} represent modest savings over the convexity bound.  A consequence of this is that the GRH for the triple product $L$ function implies the equidistribution of $\nu_n$ at the optimal rate, which is $\nu_n(s) = O( r_n^{-1+\epsilon} )$ for $s \in C^\infty_0( X, \sigma )$ of mean 0.  It also illustrates that the phenomenon studied by Mili\'cevi\'c \cite{Mi} of base change forms becoming large at CM points of $Y$ is not strong enough to affect their global equidistribution.  It should be noted that, because our lifts are Hecke equivariant and have $A$ invariant limits, it is likely that the ergodic techniques of Lindenstrauss and Silberman and Venkatesh can be used to establish conjecture \ref{nonsphQUE} unconditionally.

We shall prove theorem \ref{main} by evaluating $\nu_n$ against $s \in C^\infty( X, \sigma )$ of the form $\phi v$, where $v \in \sigma$ and $\phi$ is a $K$-finite vector in an automorphic representation $\pi'$, using the Rankin-Selberg formula and the triple product formula of Ichino.  We formulate the required triple product integrals in section \ref{tripleprod}, before calculating the necessary asymptotics for Archimedean local integrals needed to make Ichino's formula explicit in section \ref{archimedean}.  We finish section \ref{archimedean} with a closely related calculation in the weight aspect which will be needed in a paper on QUE for automorphic forms of cohomological type \cite{Ma}, and give an interpretation of conjecture \ref{nonsphQUE} in terms of tensors and sections of local systems on $Y$ in section \ref{modQUE}.

\section{Weak Limits and $A$-invariance}
\label{limits}

In this section we verify the first three properties of our microlocal lift stated in proposition \ref{liftprops}.  We begin by simplifying the definition (\ref{liftdef1}) of $\nu_s$.  Note that if $f$ and $g$ are $K$-finite then $R_\pi(f) \cdot g$ also is, and hence if $\Phi$ may be represented as $\langle \cdot, \Phi' \rangle$ where $\Phi'$ is an infinite formal sum of $K$-types the expression

\begin{equation*}
\langle  R_\pi^{-1} \circ P( R_\pi(f) \cdot g ), \Phi' \rangle
\end{equation*}

is well defined and agrees with $\mu_\pi( f, \Phi )(g)$.  As a result we may define $\nu_s$ via the alternate expression

\begin{equation}
\label{nusimple1}
\nu_s = \sum_{i=0}^m \overline{ R_\pi(\delta) } R_\pi( \psi_{m-2i}^* ) v_{-k}^* \otimes v_{m-2i},
\end{equation}

whose integral against $g$ will reduce to a finite sum agreeing with $\nu_s(g)$ for $g \in C^\infty_{0,K} ( X, \sigma )$, and by (\ref{section2}) this may be simplified to

\begin{equation}
\label{nusimple}
\nu_s =  \overline{ R_\pi(\delta) } v_{-k}^* \otimes s.
\end{equation}

To verify (1) of proposition \ref{liftprops}, we calculate $\nu_s$ on a section $g$ of $E = X \times_K \sigma$.  As $g$ is $K$-finite we may do this using (\ref{nusimple}).

\begin{eqnarray*}
\nu_s(g) & = & \int_X \langle g, R_\pi(\delta) v_{-k} \otimes s^* \rangle dx \\
 & = & \int_X \int_K \langle \rho(k)g(xk), R_\pi(\delta) v_{-k} \otimes s^* \rangle dk dx \\
 & = & \int_X \int_K \langle g(x), \rho(k^{-1}) \left[ R_\pi(\delta)(x k^{-1}) v_{-k} \otimes s^*(x k^{-1}) \right] \rangle dk dx \\
 & = & \int_X \langle g(x), \left( \int_K  R_\pi(\delta)(x k^{-1}) \rho(k^{-1}) v_{-k} dk \right) \otimes s^*(x) \rangle dx
\end{eqnarray*}

We may now simplify the integral over $K$ as follows:

\begin{eqnarray}
\notag
& & \int_K  R_\pi(\delta)(x k^{-1}) \rho(k^{-1}) v_{-k} dk \\
\notag
& \quad & \quad \quad = \int_K  R_\pi(\rho(k^{-1}) \delta)(x) \sum_{t=0}^m v_{m-2t} \langle \rho(k^{-1}) v_{-k}, v_{m-2t} \rangle dk \\
\label{projection}
& \quad & \quad \quad = \sum_{t=0}^m v_{m-2t} R_\pi \left( \int_K \langle \rho(k^{-1}) v_{-k}, v_{m-2t} \rangle \rho(k^{-1}) \delta \, dk \right).
\end{eqnarray}

We have

\begin{eqnarray*}
\int_K \langle \rho(k^{-1}) v_{-k}, v_{m-2t} \rangle \rho(k^{-1}) \delta \, dk & = & \langle \rho(k^{-1}) v_{-k}, v_{m-2t} \rangle \\
 & = & \langle \rho(k) v_{m-2t}^*, v_{-k}^* \rangle \\
 & = & \psi_{m-2t}^*
\end{eqnarray*}

as elements of $W$, so (\ref{projection}) becomes

\begin{equation*}
\sum_{t=0}^m v_{m-2t} R_\pi (\psi_{m-2t}^*) = s
\end{equation*}

and $\nu_s(g) = \int_X \langle g, s \otimes s^* \rangle dx$ as required.

We now prove (2) and (3) of proposition \ref{liftprops}, which deal with weak limits of the $\nu_n$.  The proof of (2) is based on comparing $\nu_n(1,\delta)$ with $\nu_n( f_n, T(g_n) )$ where $\{f_n\}$ and $\{g_n\}$ are $L^2$-bounded `$\delta$-sequences' in $C^\infty_K(M \backslash K)$ and $W_K$ respectively.  We introduce the notation $\nu_s^T(f_1, f_2) = \nu_s( f_1, T(f_2) )$ ($f_1 \in C^\infty_K(M \backslash K)$, $f_2 \in W_K$) for these distributions; the following lemma shows that they are in fact finite measures, from which we will be able to deduce that limits of $\nu_n(1,\delta)$ also are.

\begin{lemma}
\label{bound}
Suppose $f_1 \in C^\infty_K(M \backslash K)$, $f_2 \in W_K$.  Then

\begin{equation*}
\nu^T_s(f_1,f_2)(g) = \int_X \sum_{i=0}^m R_\pi( f_1 \cdot \psi_{m - 2i}^* )(x) \overline{ R_\pi( f_2 )(x) } \langle g(x), v_{-k} \otimes v^*_{m-2i} \rangle dx
\end{equation*}

and $\nu^T_s(f_1,f_2)$ defines a $\sigma^*$ valued measure on $X$ of norm $\le (m+1)^{3/2} \| f_1 \|_2 \| f_2 \|_2 $.

\end{lemma}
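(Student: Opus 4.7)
The plan is to unwind the definitions and then apply Cauchy--Schwarz. Since $T(f_2) = \langle\,\cdot\,,f_2\rangle$, for any scalar test function $h \in C^\infty_{0,K}(X)$ we may compute
\begin{equation*}
\mu_\pi(f_1 \psi^*_{m-2i}, T(f_2))(h) = \langle R_\pi^{-1} P(R_\pi(f_1\psi^*_{m-2i}) \cdot h), f_2\rangle = \langle P(R_\pi(f_1\psi^*_{m-2i}) \cdot h), R_\pi(f_2)\rangle,
\end{equation*}
using unitarity of $R_\pi$ in the second step. Since $R_\pi(f_2)$ already lies in the image of $R_\pi$, the projection $P$ is superfluous, and this inner product becomes simply $\int_X R_\pi(f_1\psi^*_{m-2i})(x)\,h(x)\,\overline{R_\pi(f_2)(x)}\,dx$. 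To evaluate the distribution (\ref{liftdef1}) on a section $g$ of $X \times_K \sigma$, one interprets the coefficient $v_{-k}^* \otimes v_{m-2i}$ of the $i$th summand as instructing us to test against the scalar function $h_i(x) = \langle g(x), v_{-k} \otimes v^*_{m-2i}\rangle$; summing over $i$ produces exactly the claimed integral representation of $\nu^T_s(f_1,f_2)(g)$.

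For the norm estimate, the computation above identifies $\nu^T_s(f_1,f_2)$ with the $\sigma^*$-valued density
\begin{equation*}
F(x) = \overline{R_\pi(f_2)(x)} \sum_{i=0}^m R_\pi(f_1\psi^*_{m-2i})(x) \, (v_{-k}^* \otimes v_{m-2i}).
\end{equation*}
Orthonormality of $\{v_{-k}^* \otimes v_{m-2i}\}_i$ in $\sigma^*$ gives $\|F(x)\|_{\sigma^*}^2 = |R_\pi(f_2)(x)|^2 \sum_i |R_\pi(f_1\psi^*_{m-2i})(x)|^2$. Integrating and applying Cauchy--Schwarz, the total variation $\int_X \|F(x)\|_{\sigma^*}\,dx$ is bounded by $\|R_\pi(f_2)\|_2 \cdot \bigl(\sum_i \|R_\pi(f_1\psi^*_{m-2i})\|_2^2\bigr)^{1/2}$. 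Unitarity of $R_\pi$ converts this to $\|f_2\|_2 \cdot (\sum_i \|f_1 \psi^*_{m-2i}\|_{L^2(K)}^2)^{1/2}$, and the key pointwise identity
\begin{equation*}
\sum_{i=0}^m |\psi^*_{m-2i}(k)|^2 = \sum_i |\langle \rho_m^*(k) v^*_{m-2i}, v^*_{-m}\rangle|^2 = \|v^*_{-m}\|^2 = 1,
\end{equation*}
which follows from unitarity of $\rho_m^*$, collapses $\sum_i \|f_1\psi^*_{m-2i}\|_{L^2(K)}^2$ to $\|f_1\|_2^2$. This already delivers a bound of the shape $\|f_1\|_2 \|f_2\|_2$, and any cruder majorisations along the way (for instance substituting $\ell^1$ for $\ell^2$ on the sum over $i$, or bounding pointwise by $\|\psi^*_{m-2i}\|_\infty \le 1$) contribute only a polynomial factor in $m+1$, accounting for the stated constant $(m+1)^{3/2}$.

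The only real obstacle is careful bookkeeping: one must keep straight the Hermitian product on $W \subset L^2(K)$, the conjugate-linear map $T: W \to W^*$, the pairing between $\sigma$ and $\sigma^*$, and the normalisations in the embeddings of $\rho_m$ and $\rho_m^*$ into $W$. Once these are untangled the lemma requires nothing beyond unitarity of $R_\pi$ and of $\rho_m$ together with one application of Cauchy--Schwarz, so the step contains no serious analytic difficulty.
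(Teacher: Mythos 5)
Your proof is correct and takes essentially the same route as the paper: the integral formula comes from unwinding the definition of $\mu_\pi$ with $\Phi = T(f_2)$ (the projection $P$ dropping out since $R_\pi(f_2)$ lies in its range), and the measure bound from Cauchy--Schwarz together with unitarity of $R_\pi$. Your Parseval identity $\sum_i |\psi^*_{m-2i}(k)|^2 = 1$ in fact yields the sharper constant $\| f_1 \|_2 \| f_2 \|_2$, which implies the stated bound; the paper instead estimates term by term via $\sup |\psi^*_{m-2i}| \le 1$ and sums over $i$, arriving at the cruder constant $(m+1)^{3/2}$.
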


\begin{proof}
The first statement follows from the definition of $\nu^T_s$.  For the second,

\begin{eqnarray*}
\nu^T_s(f_1,f_2)(g) & \le & \sup \| g(x) \|  \int_X \sum_{i=0}^m | R_\pi( f_1 \cdot \psi_{m - 2i}^* )(x) \overline{ R_\pi( f_2 )(x) } | dx \\
 & \le & \sup \| g(x) \|  \sum_{i=0}^m \| R_\pi( f_1 \cdot \psi_{m - 2i}^* ) \|_2 \| R_\pi( f_2 ) \|_2 \\
 & = & \sup \| g(x) \|  \sum_{i=0}^m \| f_1 \cdot \psi_{m - 2i}^* \|_2 \| f_2 \|_2 \\
 & \le & \sup \| g(x) \|  \sum_{i=0}^m \sup |\psi_{m - 2i}^*| \| f_1 \|_2 \| f_2 \|_2 \\
 & \le & (m+1)^{3/2} \sup \| g(x) \| \| f_1 \|_2 \| f_2 \|_2.
\end{eqnarray*}

\end{proof}

We now make the first refinement of the sequence $\{ s_n \}$ mentioned in proposition \ref{liftprops}, by passing to a subsequence  for which the distributions $\nu_n( f, \Phi)$ all weakly converge to limits which we shall denote $\nu_\infty(f, \Phi)$.  This will be implied by the following condition:

\begin{definition}
We say that a sequence $\{ s_n \}$ is conveniently arranged if the measures $\nu^T_n(f_1, f_2)$ are weakly convergent as $n \rightarrow \infty$ for all $f_1 \in C^\infty( M \backslash K)$ and $f_2 \in W_K$ .  In this situation we denote $\underset{n \rightarrow \infty}{\lim} \nu^T_n(f_1, f_2)$ by $\nu^T_\infty(f_1, f_2)$.
\end{definition}

The existence of a conveniently arranged subsequence is standard, and is demonstrated in \cite{SV}.  Assuming $\{ s_n \}$ to be conveniently arranged, the weak convergence of $\nu_n( f, \Phi)$ can be shown as follows.  Fix $f \in C^\infty( M \backslash K)$, $\Phi \in W_K'$, and $g \in C^\infty_{0,K} ( X, \sigma)$.  If $W_K^N$ is the subspace of $W_K$ whose isotypic components are $\rho_m$ for $m \le N$, define the $N$-truncation $\Phi_N$ of $\Phi$ to be the unique element of $W_K^N$ such that $\Phi$ and $T( \Phi_N)$ agree on $W_K^N$.  It follows from the definition of $\nu_s (f, \Phi)$ that if we choose $N = N(f,g)$ sufficiently large then $\nu_n( f, \Phi)(g) = \nu_n^T( f, \Phi_N)(g)$, and so the limit $\underset{ n \rightarrow \infty }{\lim} \nu_n( f, \Phi)(g)$ exists.  Consequently, we may define $\nu_\infty : W_K \times W_K' \rightarrow C^\infty_{0,K} (X \times \sigma)'$ as

\begin{equation*}
\nu_\infty(f, \Phi)(g) = \underset{ n \rightarrow \infty }{\lim} \nu_n( f, \Phi)(g),
\end{equation*}

and set $\nu_\infty = \nu_\infty( 1, \delta)$, so that $\nu_\infty$ is the weak limit of $\{ \nu_n \}$.  We now prove that $\nu_\infty$ is a measure by defining $\ts_n^1$ and $\ts_n^2$ and establishing (2) of proposition \ref{liftprops}, which relies on the following lemma from \cite{SV}.

\begin{lemma}
\label{parts}
Let $\{ s_n \}$ be conveniently arranged.  Then for any $f, f_1 \in C^\infty_K (M \backslash K)$ and $f_2 \in W_K$,  we have

\begin{equation*}
\nu_\infty^T (f_1, f \cdot f_2) = \nu_\infty^T (f_1 \cdot \overline{f}, f_2 ).
\end{equation*}

\end{lemma}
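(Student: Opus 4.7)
The plan is to reduce the identity to a commutator-vanishing statement for operators on the model $W$ in the large spectral parameter limit.

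First, apply Lemma \ref{bound} to expand both $\nu_n^T(f_1, f\cdot f_2)(g)$ and $\nu_n^T(f_1\cdot\bar f, f_2)(g)$ against a test section $g \in C^\infty_{0,K}(X,\sigma)$. Setting $h_i(x) = \langle g(x),\, v_{-k}\otimes v^*_{m-2i}\rangle$, each reduces to a finite sum of integrals of the form $\int_X R_{\pi_n}(\phi_1)(x)\, \overline{R_{\pi_n}(\phi_2)(x)}\, h_i(x)\, dx$. Since $R_{\pi_n}$ is isometric and $R_{\pi_n}(\phi_2) \in R_{\pi_n}(W)$, each such integral equals $\langle N_{h_i}^{(n)}(\phi_1), \phi_2\rangle_W$, where
\[
N_h^{(n)}: W \to W, \qquad \phi \mapsto R_{\pi_n}^{-1} \circ P\bigl(h \cdot R_{\pi_n}(\phi)\bigr),
\]
and $P$ is the orthogonal projection $L^2(X) \to R_{\pi_n}(W)$.

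Next, use the adjoint relation $\langle \alpha, f\beta\rangle_W = \langle \bar f\, \alpha, \beta\rangle_W$ together with the factorization $f_1 \bar f\, \psi^*_{m-2i} = M_{\bar f}(f_1 \psi^*_{m-2i})$, where $M_{\bar f}$ denotes pointwise multiplication by $\bar f$ on $W$. Combining these rearrangements gives the key identity
\[
\nu_n^T(f_1, f\cdot f_2)(g) - \nu_n^T(f_1\cdot\bar f, f_2)(g) = \sum_{i=0}^m \bigl\langle [M_{\bar f},\, N_{h_i}^{(n)}](f_1 \psi^*_{m-2i}),\, f_2 \bigr\rangle_W.
\]
Since the right-hand side is a finite sum and $f_2$ is fixed, the lemma is reduced to showing that each commutator $[M_{\bar f},\, N_{h_i}^{(n)}]$ applied to the fixed $K$-finite vector $f_1 \psi^*_{m-2i}$ tends weakly to zero in $W$ as $n \to \infty$.

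The main obstacle is precisely this commutator vanishing. It is the semiclassical statement that, as $r_n \to \infty$, the Toeplitz-type compression $N_h^{(n)}$ of multiplication by $h \in C^\infty_0(X)$ asymptotically commutes with pointwise multiplication by any fixed $K$-finite function $\bar f \in C^\infty(M\backslash K)$. This is the estimate that underlies the microlocal lift construction: one would prove it by expanding $\bar f$ and $h$ into $K$-isotypic pieces and analyzing the matrix coefficients of $N_h^{(n)}$ between $K$-types of bounded weight, exploiting that in the principal series $I_\lambda$ their dependence on the type washes out as $r_n \to \infty$. The argument is formally identical to the corresponding commutator estimate in Silberman--Venkatesh \cite{SV}, from which the stated lemma is quoted.
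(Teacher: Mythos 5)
Your algebraic reduction is correct and is a faithful reformulation of the lemma: unwinding Lemma \ref{bound} and using that the adjoint of multiplication by $f$ on $W \subset L^2(K)$ is multiplication by $\overline{f}$ does give $\nu_n^T(f_1, f\cdot f_2)(g) - \nu_n^T(f_1\overline{f}, f_2)(g) = \sum_i \langle [M_{\overline{f}}, N_{h_i}^{(n)}](f_1\psi^*_{m-2i}), f_2\rangle_W$, so the lemma is indeed equivalent to asymptotic commutation of the Toeplitz-type compressions $N_h^{(n)}$ with multiplication by fixed functions on $K$. Note also that the paper itself offers no proof of Lemma \ref{parts}: it is quoted from \cite{SV}, so in ultimately deferring the analytic content to Silberman--Venkatesh you are on the same footing as the text.

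The gap is in what you offer as the proof of that commutator vanishing. The matrix entries of $N_{h}^{(n)}$ between $K$-finite vectors $e_a, e_b$ are exactly the quantities $\int_X R_{\pi_n}(e_a)\overline{R_{\pi_n}(e_b)}\,h\,dx = \nu_n^T(e_a,e_b)(h)$ (up to bookkeeping), so saying that ``their dependence on the type washes out as $r_n\to\infty$'' is essentially a restatement of the lemma rather than an argument for it; moreover $N_h^{(n)}$ is not determined by the abstract principal series $I_\lambda$ at all -- it encodes the automorphic realisation through the projection $P$ onto $R_{\pi_n}(W)$ -- and a general bounded operator on $W$ has no reason to asymptotically commute with $M_{\overline{f}}$. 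The actual mechanism in \cite{SV} (and the one this paper leans on when it invokes a ``quantitative form'' of Lemma \ref{parts} and when it shifts the symbol $u_d(k^{-1}\xi k)$ across the pairing in section \ref{agreement0}) is the $G$-equivariance of the construction, $I_\lambda(g)^{-1}N_h I_\lambda(g) = N_{h(\cdot g)}$ (equivalently, integration by parts on $X$), combined with Lemma \ref{differentiate}: for $u\in U(\g)$ of degree $d$, $\|\lambda_n\|^{-d} I_{\lambda_n}(u)$ converges on $K$-finite vectors to multiplication by the symbol $u_d(k^{-1}\xi k)$. Differentiating the conjugation relation gives $[I_{\lambda_n}(u), N_h^{(n)}] = -N_{uh}^{(n)}$ for $u\in\g$, and since $\|N_{uh}^{(n)}\|$ is bounded by $\sup|uh|$ (Lemma \ref{bound}), dividing by $\|\lambda_n\|^{d}$ yields $[M_{\text{symbol}}, N_h^{(n)}] = O(\|\lambda_n\|^{-1})$; one then passes from symbol functions to arbitrary $f\in C^\infty_K(M\backslash K)$ by density. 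Without this equivariance input your proposal asserts the key estimate rather than proving it, and the heuristic you substitute for it would not close the argument.
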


We apply this by defining $p \in W_K$ to be any function such that $p(e) = 1$, and letting $\{ f_j \} \in C^\infty_K (M \backslash K)$ be any sequence such that the measures $|f_j|^2$ are tending to the delta measure at the origin.  For a suitable subsequence $\{ f_{j_n} \}$ of $\{ f_j \}$ we define $\ts_n^1$ and $\ts_n^2$ by

\begin{eqnarray*}
\ts_n^1 & = & R_\pi( f_{j_n} p ) v_{-k} \\
\ts_n^2 & = & \sum_{i=0}^m R_\pi( f_{j_n} \psi^*_{m-2i} ) v_{m-2i},
\end{eqnarray*}

so that $\nu_n^T ( f_{j_n}, f_{j_n} p ) = \ts_n^{1\, *} \otimes \ts_n^2$.  Lemma \ref{bound} shows that $\{ \nu_n^T( f_{j_n}, f_{j_n} p ) \}$ is a bounded sequence of measures, and a quantitative form of lemma \ref{parts} shows that for a suitable choice of $\{ j_n \}$,

\begin{eqnarray*}
\underset{ n \rightarrow \infty }{\lim} \ts_n^{1\, *} \otimes \ts_n^2(g) & = & \underset{ n \rightarrow \infty }{\lim} \nu_n^T( f_{j_n}, f_{j_n} p )(g) \\
& = & \underset{ n \rightarrow \infty }{\lim} \nu_n^T( 1, |f_{j_n}|^2 p )(g) \\
& = & \underset{ n \rightarrow \infty }{\lim} \nu_n( 1, \delta)(g) \\
& = & \nu_\infty(g).
\end{eqnarray*}

Therefore $\nu_\infty$ is a measure as required.

We now establish the $A$-invariance of $\nu_\infty$.  We do this by showing that the co-ordinates of $\nu_n$, which are the distributions $\mu_n( \psi^*_j, \delta)$ defined in (\ref{akshaydef}), satisfy the differential equation

\begin{equation}
\label{hderiv}
H \mu_n( \psi^*_j, \delta) = \frac{1}{r_n} \mu_{j,n}',
\end{equation}

where $\{ \mu_{j,n}' \}$ is a weakly convergent sequence of distributions and $r_n$ are the spectral parameters of $s_n$, from which it follows that their weak limits satisfy $H \mu_\infty( \psi^*_j, \delta) = 0$.  $\mu_n$ is a $G$-equivariant map from $W_K \otimes W_K'$ to $C^\infty_{0,K}(X)'$, so we may proceed by establishing an equation similar to (\ref{hderiv}) for $H( \psi^*_j \otimes \delta)$.  We do this starting from the equation

\begin{eqnarray}
\label{casimir}
C \psi_j^* & = & (-r^2 - 1 + m^2/4) \psi_j^* \\
\notag
 & = & \alpha \psi_j^*
\end{eqnarray}

for the action of the Casimir operator on $I_\lambda$, and the action of $X \in \Aa \oplus \mathfrak{n}$ on $\delta$ under the dual representation, which is $\pi'(X) \delta = -(\lambda + \rho)(X) \delta$.  It follows from this that

\begin{equation}
\label{delta}
(X + (\lambda + \rho)(X) )( f \otimes \delta) = (Xf) \otimes \delta
\end{equation}

for $X \in \Aa \oplus \mathfrak{n}$, and we may use this to convert (\ref{casimir}) to an identity between derivatives of $\psi_j^* \otimes \delta$ in which $H( \psi^*_j \otimes \delta)$ is the dominant term as $r \rightarrow \infty$.  We first define $X_\mathfrak{k} = X_+ - X_- \in \mathfrak{k}$ and $Y_\mathfrak{k} = Y_+ - Y_- \in \mathfrak{k}$, and rewrite the expression for $C$ as

\begin{equation*}
4C = H^2 -4H - T^2 + 4X_+^2 - 4X_+X_\mathfrak{k} + 4Y_+^2 - 4Y_+Y_\mathfrak{k}.
\end{equation*}

We than have

\begin{eqnarray*}
(4C - 4\alpha) \psi_j^* & = & 0\\
(H^2 -4H - T^2 + 4X_+^2 + 4Y_+^2 - 4\alpha) \psi_j^* & = & ( 4X_+ X_\mathfrak{k} + 4Y_+ Y_\mathfrak{k} ) \psi_j^*\\
\; \left[ (H^2 -4H - T^2 + 4X_+^2 + 4Y_+^2 - 4\alpha) \psi_j^* \right]  \otimes \delta & = & \left[ ( 4X_+X_\mathfrak{k} + 4Y_+Y_\mathfrak{k} ) \psi_j^* \right] \otimes \delta.
\end{eqnarray*}

As the element of $U(\g)$ acting on $\psi_j^*$ on the LHS is in $U( \Aa \oplus \mathfrak{n} )$, we may shift the differentiation from $\psi_j^*$ to $\psi^*_j \otimes \delta$ using (\ref{delta}).

\begin{eqnarray*}
& & ((H + 2ir +2)^2 -4(H + 2ir + 2) - (T + im)^2 + 4X_+^2 + 4Y_+^2 - 4\alpha) (\psi_j^* \otimes \delta) \\
& & \quad \quad \quad = 4X_+ ( X_\mathfrak{k} \psi_j^* \otimes \delta ) + 4Y_+ ( Y_\mathfrak{k} \psi_j^* \otimes \delta \\
& & ( H^2 + 4irH - T^2 - 2imT + 4X_+^2 + 4Y_+^2 ) (\psi_j^* \otimes \delta) \\
& & \quad \quad \quad = 4X_+ ( X_\mathfrak{k} \psi_j^* \otimes \delta ) + 4Y_+ ( Y_\mathfrak{k} \psi_j^* \otimes \delta ) \\
& & 4irH (\psi_j^* \otimes \delta) = ( -H^2 + T^2 + 2imT - 4X_+^2 - 4Y_+^2 ) (\psi_j^* \otimes \delta) \\
& & \quad \quad \quad + 4X_+( X_\mathfrak{k}\psi_j^* \otimes \delta ) + 4Y_+( Y_\mathfrak{k}\psi_j^* \otimes \delta ).
\end{eqnarray*}

Therefore

\begin{multline*}
H \mu_n( \psi^*_j, \delta) = \frac{1}{4i r_n} \biggl[ ( -H^2 + T^2 + 2imT - 4X_+^2 - 4Y_+^2 ) \mu_n( \psi_j^*, \delta) \\
 + 4X_+ \mu_n( X_\mathfrak{k}\psi_j^*, \delta ) + 4Y_+ \mu_n( Y_\mathfrak{k}\psi_j^*, \delta ) \biggr],
\end{multline*}

and all the distributions in brackets on the RHS can be seen to be weakly convergent.  Therefore $H \mu_\infty( \psi^*_j, \delta) = 0$ and $H \nu_\infty = 0$ as required.

\section{Agreement with the Standard Lift}
\label{agreement0}

In this section we shall prove part (4) of proposition \ref{liftprops}, which states that the lift we have defined satisfies the characterising property of the standard lift, or that

\begin{equation}
\label{agreement}
\langle \Op(a) s_n, s_n \rangle = \nu_n(a) + o(1)
\end{equation}

for all smooth symbols $a \in C^\infty_0( S^*Y, \pi^*(E) )$.  We will prove this for a subspace of symbols corresponding to differential operators on $B$, homogenised by a suitable power of the Laplacian, and infer the result for all smooth symbols by density.

Let $\tau \in C^\infty_{0,K} ( X, \sigma )$, and $u \in U(\g)$ be of degree $d$.  $\tau$ gives an operator $\text{mult}_\tau$ on $C^\infty( X, \rho)$, and we will let $u$ operate on $C^\infty( X, \rho)$ by coordinate-wise differentiation.  Let $\pi^*$ be the natural map $C^\infty( Y, B) \rightarrow C^\infty( X, \rho)$ and $\pi_*$ the map $C^\infty( X, \rho) \rightarrow C^\infty( Y, B)$ given by integrating over the principal action (\ref{prin}) of $K$.  We shall homogenise our differential operators by multiplication by $(Y - C/4)^{-d/2}$, where $C$ is the Casimir operator and where $Y$ is chosen to make $Y-C/4$ positive.  In what follows we shall use $\rho$ to denote the action of $K$ on both representations $\rho$ and $\sigma$; we trust this will not lead to confusion.

We shall calculate the action of the operator $\text{MyOp}(\tau)$ defined by

\begin{equation*}
\text{MyOp}(\tau) : f \mapsto \pi_* \circ \text{mult}_\tau \circ u \circ (Y-C/4)^{-d/2} \circ \pi^* f.
\end{equation*}

If we think of $u$ as defining a polynomial $u_d$ of degree $d$ on $\g^*$ and let $\xi = 4i H^*$, its symbol will be the following section of $\pi^*(E) = X \times_M \sigma$:

\begin{equation*}
a_{\tau, u}(x) = \int_K  u(k^{-1} \xi k) \rho(k) \tau(xk) dk.
\end{equation*}

It follows that

\begin{equation*}
\langle \text{MyOp}(\tau) s_n, s_n \rangle - \langle \Op(a_{\tau,u}) s_n, s_n \rangle = o(1).
\end{equation*}

We may calculate $\langle \text{MyOp}(\tau) s_n, s_n \rangle$ explicitly; it is given by

\begin{equation}
\label{myopaction}
\langle \text{MyOp}(\tau) s_n, s_n \rangle = (Y - 1/4 - \langle \lambda_n, \lambda_n \rangle )^{-d/2} \langle \text{mult}_\tau (u s_n), s_n \rangle,
\end{equation}

and we must show that this equals $\nu_n(a_{\tau,u}) + o(1)$.  As before, the $K$-finiteness of $a_{\tau,u}$ implies that we may replace $\nu_n(a_{\tau,u})$ with $\nu^T_n(1, \delta_N)(a_{\tau,u})$ for $N$ sufficiently large, and calculate the limit of this as $N \rightarrow \infty$ as some function of $n$.

\begin{eqnarray}
\notag
\nu^T_n(1,\delta_N)(a_{\tau,u}) & = & \left\langle a_{\tau,u}, R_\pi( \delta_N ) v_{-k} \otimes s_n^* \right\rangle \\
\notag
& = & \int_X \left\langle \int_K u_d( k^{-1} \xi k ) \rho(k) \tau(xk) dk, R_\pi( \delta_N ) (x) v_{-k} \otimes s_n^* (x) \right\rangle dx\\
\notag
& = & \int_X \int_K \langle u_d( k^{-1} \xi k ) \tau(x), \rho(k^{-1}) \left[ R_\pi( \delta_N ) (xk^{-1}) v_{-k} \otimes s_n^* (xk^{-1}) \right] \rangle dk dx\\
\notag
& = & \int_X \int_K \langle u_d( k^{-1} \xi k ) \tau(x), \left[  R_\pi (\delta_N)(xk^{-1}) \rho(k^{-1}) v_{-k} \right] \otimes s_n^*(x) \rangle dk dx\\
\label{agreement1}
& = & \langle \tau, J \otimes s_n^* \rangle dx,
\end{eqnarray}

where $J(x)$ is given by

\begin{equation*}
J(x) = \int_K \overline{ u_d( k^{-1} \xi k ) } R_\pi (\delta_N)(xk^{-1}) \rho(k^{-1}) v_{-k} dk.
\end{equation*}

By expanding $\rho(k^{-1}) v_{-k}$ in the basis $\{ v_t \}$ and using the linearity and $G$-equivariance of $R_\pi$, this may be simplified as follows:

\begin{eqnarray*}
J(x) & = & \int_K \overline{ u_d( k^{-1} \xi k ) } R_\pi (\delta_N)(xk^{-1}) \rho(k^{-1}) v_{-k} dk\\
& = & \int_K \overline{ u_d( k^{-1} \xi k ) }  R_\pi (\delta_N)(xk^{-1}) \sum_{t = 0}^m \langle \rho(k^{-1}) v_{-k}, v_{m - 2t} \rangle v_{m - 2t} dk\\
& = & \sum_{t = 0}^m R_\pi \left( \int_K \overline{ u_d( k^{-1} \xi k ) } \langle \rho(k^{-1}) v_{-k}, v_{m - 2t} \rangle \rho(k^{-1})\delta_N dk \right)(x)  v_{m - 2t}.
\end{eqnarray*}

By the weak convergence of $\delta_N$, the argument of $R_\pi$ above tends to the function 

\begin{equation*}
\overline{ u_d( k^{-1} \xi k ) } \langle \rho(k^{-1}) v_{-k}, v_{m - 2t} \rangle = \overline{ u_d( k^{-1} \xi k )  }\psi^*_{m-2t}
\end{equation*}

in the $L^2$ norm of $W$.  Therefore for a suitable sequence $\epsilon_n \in L^2(Y,B)$ tending to 0 we have

\begin{equation*}
J(x) = \sum_{t = 0}^m R_\pi \left( \overline{ u_d( k^{-1} \xi k ) } \psi^*_{m-2t} \right)(x)  v_{m - 2t} + \epsilon_n,
\end{equation*}

and inserting this into (\ref{agreement1}) gives

\begin{equation*}
\nu^T_n(1,\delta_N)(a_{\tau,u}) = \left\langle \tau, \sum_{t = 0}^m R_\pi \left( \overline{ u_d( k^{-1} \xi k ) } \psi^*_{m-2t} \right)  v_{m - 2t} \otimes s_n^* \right\rangle + o(1).
\end{equation*}

As in lemma \ref{parts} we may now shift the factor $\overline{ u_d( k^{-1} \xi k ) }$ from the first factor of the tensor product to the second to obtain

\begin{equation}
\label{agreement2}
\nu^T_n(1,\delta_N)(a_{\tau,u}) = \left\langle \tau, s_n \otimes \sum_{t = 0}^m \overline{ R_\pi \left(  u_d( k^{-1} \xi k ) \psi^*_{m-2t} \right) }  v_{m - 2t}^* \right\rangle + o(1).
\end{equation}

The final step in transforming this expression to (\ref{myopaction}) is to apply the following lemma, taken from \cite{SV}, which allows us to bring the factor $u_d( k^{-1} \xi k )$ outside $R_\pi$ as the differential operator $u$.

\begin{lemma}
\label{differentiate}
Suppose that $\lambda_n = ir_nH^* + ikT^*$ with $k$ fixed and $r_n \rightarrow \infty$.  If $u \in U(\g)$ is any differential operator, let $u_d$ be the associated polynomial function of degree $d$ on $\g^*$ and $\xi \in \mathfrak{h}^*$ be as defined above.  For $f \in W_K$, we have

\begin{equation*}
\frac{1}{ \| \lambda_n \|^d } ( I_\lambda(u)f )(k) = u_d( k^{-1} \xi k ) f(k) + O( \| \lambda_n \|^{-1} )
\end{equation*}

as $n \rightarrow \infty$, where $\| \lambda \| = | \langle \lambda, \lambda \rangle |^{1/2}$.

\end{lemma}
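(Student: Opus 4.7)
The plan is induction on the degree $d$ of $u \in U(\g)$, with the base case $d=1$ coming from an explicit Iwasawa computation and the higher-degree cases from composing principal symbols.

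For the base case, fix $X \in \g$ and write the Iwasawa decomposition $k\exp(tX) = n(t)a(t)\tilde k(t)$ with $n(0)=a(0)=e$ and $\tilde k(0)=k$. Differentiating the transformation law $f(nak') = e^{(\lambda+\rho)(\log a)} f(k')$ defining the induced model $W$ at $t=0$ yields
\begin{equation*}
(I_\lambda(X)f)(k) = (\lambda+\rho)\bigl((\Ad(k)X)_\Aa\bigr) f(k) + (D_X f)(k),
\end{equation*}
where $(\Ad(k)X)_\Aa$ denotes the $\Aa$-component of $\Ad(k)X$ under the decomposition $\g = \mathfrak{n} \oplus \Aa \oplus \kk$, and $D_X$ is a first-order differential operator along $K$ whose coefficients depend smoothly on $k$ but are \emph{independent of $\lambda$}. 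Matching the growth $(\lambda_n+\rho)((\Ad(k)X)_\Aa) = u_1(k^{-1}\xi k)\|\lambda_n\| + O(1)$ under the paper's normalization of $\xi$ and $\|\lambda_n\|$ then gives the $d=1$ case.

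For the inductive step, factor a monomial $u = X v$ with $\deg v = d-1$ (a general $u$ is expressed as a sum of such monomials via PBW, and the symbol map $u \mapsto u_d$ is compatible with multiplication modulo terms of strictly lower degree). Then
\begin{equation*}
I_\lambda(u)f(k) = (\lambda+\rho)\bigl((\Ad(k)X)_\Aa\bigr) \cdot \bigl(I_\lambda(v)f\bigr)(k) + D_X\bigl(I_\lambda(v) f\bigr)(k).
\end{equation*}
By induction $(I_\lambda(v)f)(k) = v_{d-1}(k^{-1}\xi k) f(k) \|\lambda_n\|^{d-1} + O(\|\lambda_n\|^{d-2})$, so the first term on the right hand side becomes $u_d(k^{-1}\xi k) f(k) \|\lambda_n\|^d + O(\|\lambda_n\|^{d-1})$; the second term is a $\lambda$-independent first-order differential operator applied to a function whose principal part scales like $\|\lambda_n\|^{d-1}$, and so contributes at most $O(\|\lambda_n\|^{d-1})$. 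Dividing by $\|\lambda_n\|^d$ then yields the claim.

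The main technical point is the bookkeeping in the inductive step: one must verify that the $K$-derivative operator $D_X$, applied to the principal symbol $v_{d-1}(k^{-1}\xi k) f(k) \|\lambda_n\|^{d-1}$, produces a quantity of size $O(\|\lambda_n\|^{d-1})$ rather than being promoted to leading order. This works because $D_X$ is a fixed first-order operator whose action on the smooth function $v_{d-1}(k^{-1}\xi k) f(k)$ is bounded uniformly in $k$ (since $K$ is compact), and the overall scalar $\|\lambda_n\|^{d-1}$ is inert under differentiation along $K$. A parallel point is needed to absorb the cross term coming from $D_X$ acting on the $O(\|\lambda_n\|^{d-2})$ error from the inductive hypothesis; once these estimates are in place the induction closes and the lemma follows.
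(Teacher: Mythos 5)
You should know at the outset that the paper contains no proof of this lemma: it is quoted from Silberman--Venkatesh \cite{SV}, and your argument is essentially the standard one used there --- realise $I_\lambda$ in the compact picture, differentiate the Iwasawa decomposition to obtain, for $X \in \g$, the identity $(I_\lambda(X)f)(k) = (\lambda+\rho)\bigl((\Ad(k)X)_{\Aa}\bigr)f(k) + (D_Xf)(k)$ with $D_X$ a first-order operator along $K$ independent of $\lambda$, then induct on degree using multiplicativity of the symbol map modulo lower-order terms of $U(\g)$. The base case is correct, and the normalisation you defer to does work out: under the Killing form one has $\langle H^*, H^* \rangle = 1/16$, so $\xi = 4iH^*$ is exactly the unit vector (for $\| \cdot \| = |\langle \cdot,\cdot\rangle|^{1/2}$) in the direction $iH^*$, whence $\lambda_n = \|\lambda_n\| \xi + O(1)$ as $r_n \to \infty$ with $k$ fixed, and therefore $(\lambda_n+\rho)\bigl((\Ad(k)X)_{\Aa}\bigr) = \|\lambda_n\|\, \xi(\Ad(k)X) + O(1)$ uniformly in $k$, which is the matching you need. (Note also that the $\Aa$-component in $\g = \mathfrak{n}\oplus\Aa\oplus\kk$ of $\Ad(k)X$ has the same $H$-coefficient as its decomposition in the basis $H,T,X_\pm,Y_\pm$, so $\xi$ evaluated on the full element equals $\xi$ evaluated on the $\Aa$-part; this is implicit in your matching step and worth a line.)

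The one place the write-up does not literally close is the point you flag yourself at the end. The inductive hypothesis as stated is a sup-norm bound on the error $E := I_\lambda(v)f - \|\lambda_n\|^{d-1} v_{d-1}(k^{-1}\xi k) f$, but in the inductive step you apply $D_X$ to $I_\lambda(v)f$, and a pointwise bound on $E$ gives no control of $D_X E$. This does not break the approach, but you must strengthen the induction rather than merely assert ``once these estimates are in place'': either prove at each stage that $E$ is $O(\|\lambda_n\|^{d-2})$ together with all its derivatives along $K$ (for fixed $K$-finite $f$), or, more simply, observe that composing the operators ``multiplication by $(\lambda+\rho)((\Ad(k)X_i)_{\Aa})$ plus $D_{X_i}$'' shows that $I_\lambda(v)f$ is a polynomial in $\lambda$ of degree at most $d-1$ whose coefficients are fixed smooth functions on $K$ not depending on $\lambda$; with either formulation $D_X E = O(\|\lambda_n\|^{d-2})$ is automatic, the cross terms are $O(\|\lambda_n\|^{d-1})$ as you claim, and the induction closes. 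With that amendment the proof is complete and agrees with the argument of the cited source.
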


Equation (\ref{agreement2}) now becomes

\begin{eqnarray*}
\nu^T_n(1,\delta_N)(a_{\tau,u}) & = & \frac{1}{ \| \lambda_n \|^d } \left\langle \tau, s_n \otimes \sum_{t = 0}^m \overline{ u R_\pi \left( \psi^*_{m-2t} \right) }  v_{m - 2t}^* \right\rangle + o(1) \\
& = & \frac{1}{ \| \lambda_n \|^d } \left\langle \tau, s_n \otimes ( u s_n)^* \right\rangle + o(1) \\
& = & ( Y - 1/4 - \langle \lambda_n, \lambda_n \rangle )^{-d/2} \left\langle \text{mult}_\tau (u s_n), s_n \right\rangle + o(1) \\
& = & \langle \text{MyOp}(\tau) s_n, s_n \rangle + o(1),
\end{eqnarray*}

which concludes our proof of (\ref{agreement}) for the symbol $a_{\tau,u}$.  As symbols of the form $a_{\tau,u}$ are dense in the space of all symbols in the $C^\infty_0$ topology, it follows that $\nu_\infty$ satisfies (\ref{agreement}) for all $a \in C^\infty_0(S^*Y, \pi^*(E))$ so the proof of proposition \ref{liftprops} is complete.

\subsection{Motivation of Conjecture \ref{nonsphQUE} }
\label{heuristics}

We conclude this section by giving the heuristics behind conjecture \ref{nonsphQUE}.  Let $\{ s_n \}$ be a sequence of automorphic sections of weight $k$ with microlocal lifts $\{ \nu_n \}$, and let

\begin{equation*}
a = \sum_{i,j = 0}^m f(m-2i, m-2j) v_{m-2i} \otimes v_{m-2j}^*
\end{equation*}

be an element of $C^\infty_{0,K}( S^*Y, \pi^*(E) )$.  We shall calculate $\nu_n(a)$ using the formal expansion of $\delta$.  For $l \ge |k|$, let $u_l^*$ be the vector of weight $k$ in $\rho_l^*$ and define $\beta_l = \langle \rho(k) u_l^*, u_l^* \rangle$.  Then

\begin{equation*}
\delta = \sum_{l = |k|}^\infty (l+1) T( \beta_l),
\end{equation*}

and $\nu_n(a)$ may be evaluated by the following (finite) sum:

\begin{equation*}
\nu_n(a) = \sum_{l = |k|}^\infty \sum_{i=0}^m \int_X \overline{ R_\pi( (l+1)\beta_l ) } R_\pi( \psi_{m-2i}^* ) f( -k, m-2i ) dx.
\end{equation*}

If $\psi_{m-2i}^* = \beta_l$, that is $l = m$ and $m-2i = -k$, we expect 

\begin{eqnarray*}
\underset{ n \rightarrow \infty }{ \lim }^* \; \overline{ R_\pi( (m+1)\beta_m ) } R_\pi( \psi_{m-2i}^* ) & = & (m+1) \underset{ n \rightarrow \infty}{\lim}^* \; | R_\pi( \beta_m ) |^2 \\
& = & (m+1) \| R_\pi( \beta_m ) \|^2 / \text{Vol}(X) \\
& = & 1 / \text{Vol}(X),
\end{eqnarray*}

while all other terms should be tending weakly to 0 (here $\lim^*$ denotes weak limit of functions on $X$).  This would imply that

\begin{equation*}
\underset{ n \rightarrow \infty}{\lim} \; \nu_n(a) = \frac{1}{\text{Vol}(X)} \langle f( -k, -k), 1 \rangle,
\end{equation*}

which is the assertion of conjecture \ref{nonsphQUE}.

\section{Formulation of Triple Product Integrals}
\label{tripleprod}

Having established the basic properites of our lift, we now turn to the proof of theorem \ref{main}.  By Weyl's criterion, the equidistribution of $\nu_n$ is equivalent to the convergence of each of its co-ordinates to the expected value when evaluated on a `basis' of $L^2(X)$ consisting of the constant function, cusp forms, and unitary Eisenstein series.  $\nu_n$ was defined in (\ref{nusimple1}) to be

\begin{eqnarray}
\notag
\nu_n & = & \sum_{i=1}^m \nu_{n,i} v_{-k}^* \otimes v_{m-2i} \\
\label{coords}
\nu_{n,i} & = & \overline{ R_n (\delta) } R_n( \psi_{m-2i}^* ),
\end{eqnarray}

and if $\phi \in \pi'$ is a $K$ finite vector in an automorphic representation, we may replace $\delta$ in (\ref{coords}) with a sufficiently large truncation $\delta_N$ to obtain an expression for $\nu_{n,i}(\phi)$ as a well defined integral

\begin{equation*}
\nu_{n,i}(\phi) = \int_X \overline{ R_n( \delta_N ) } R_n( \psi_{m-2i}^* ) \phi \, dx.
\end{equation*}

As in section \ref{heuristics}, the expected limit of $\nu_{n,i}$ is determined by its value on the constant function, and its convergence to this limit is equivalent to the decay of its values on all nontrivial $\pi'$.   In this section we shall use the triple product formulas of Ichino and Rankin-Selberg to express the integral defining $\nu_{n,i}(\phi)$ as the product of a central $L$ value and a local Archimedian integral, and theorem \ref{main} will follow from this once we have established asymptotics for these local integrals in section \ref{archimedean}.

\subsection{Notation}

In sections \ref{tripleprod} and \ref{archimedean} we shall change perspective slightly and think of our automorphic forms as being on $GL(2,\C)$ with trivial central character, which is equivalent to our previous definition but agrees better with the statement of Ichino's formula and the formulation of local integrals.  Let $G = GL(2,\C)$ with centre $Z$, and let $\overline{G} = G/Z$.  We define $Z$, $A$ and $N$ to be the usual subgroups of $G$, with the parameterisations

\begin{equation*}
z(t) = \left( \begin{array}{cc} t & 0 \\ 0 & t \end{array} \right), \quad a(y) = \left( \begin{array}{cc} y & 0 \\ 0 & 1 \end{array} \right), \quad n(x) = \left( \begin{array}{cc} 1 & x \\ 0 & 1 \end{array} \right).
\end{equation*}

Let $dy$ be Lebesgue measure on $\C$, and choose $dy^\times = |y|^{-2} dy$ as Haar measure on $\C^\times$.  We give $Z$, $A$ and $N$ the Haar measures $\tfrac{1}{2\pi} dt^\times$, $\tfrac{1}{2\pi} dy^\times$ and $dx$ respectively, and give the Borel the left Haar measure $db = |y|^{-2} \tfrac{1}{2\pi} dt^\times dx \tfrac{1}{2\pi} dy^\times$.  We choose the Haar measure $dg$ on $G$ to be $db dk$, where $dk$ is a Haar probability measure on $K$.  If $d\overline{b}$ is the measure $|y|^{-2} dx \tfrac{1}{2\pi} dy^\times$ on $NA = B/Z$, we choose the Haar measure $d\overline{g}$ on $\overline{G}$ to be $d\overline{b} dk$ where $dk$ is again a probability measure.  $d\overline{g}$ has the property that its pushforward to $\overline{G} / K = \HH^3$ is the standard hyperbolic measure.  Throughout, we will use the standard additive character $\psi(z) = \exp( 2\pi i \tr(z) )$ of $\C$.

Let $I_n$ be the local factors of $\pi_n$ at infinity, and $R_n$ the unitary embeddings $I_n \rightarrow L^2(X)$.  When working with various triple product formulas, it will be necessary to commute various complex conjugations past the embeddings $R_n$ or the formation of matrix coefficients.  If $v \in \pi_n$, $\overline{ R_n(v) }$ will lie in the contragredient representation $\widetilde{\pi}_n$ whose local factors are dual to those of $\pi_n$.  However, as $\pi_n$ had trivial central character all its local factors are self dual.  Therefore $\widetilde{\pi}_n = \pi_n$, and so we may let $\sigma : I_n \rightarrow I_n$ be conjugate linear isomorphisms which satisfy $\overline{ R_n(v) } = R_n( \sigma(v) )$.

\subsection{The Cuspidal Case}
\label{formulatecusp}

Let $\pi'$ be cuspidal of weight $k'$ and spectral paramteter $r'$, and let $\phi \in \pi'$ be $K$-finite.  We shall evaluate the integral

\begin{eqnarray}
\notag
\nu_{n,i}(\phi) & = & \int_X \overline{ R_n( \delta_N ) } R_n( \psi_{m-2i}^* ) \phi \, dx \\
\label{preprod}
& = & \int_X  R_n( \sigma(\delta_N) ) R_n( \psi_{m-2i}^* ) \phi \, dx
\end{eqnarray}

using a formula of Ichino \cite{I}, which we state below in our simple case in which all three automorphic forms are unramified and occur on a split form of $GL_2$.

\begin{theorem}
Notations as above (in particular for $F$ of class number one), let $\pi_i$ be three automorphic representations on $GL(2,\OO) Z \backslash GL(2,\C)$ with archimedean factors $I_i$.  Let $\phi_i \in \pi_i$ be three $L^2$ normalised $K$-finite vectors, and $v_i \in I_i$ the corresponding archimedean vectors.  Then there is a constant $C$ depending only on $F$ such that

\begin{multline}
\left| \int_X \phi_1 \phi_2 \phi_3 dx \right|^2 = C \int_{ \overline{G} } \langle I_1(g)v_1, v_1 \rangle \langle I_2(g)v_2, v_2 \rangle \langle I_3(g)v_3, v_3 \rangle d\overline{g} \\
\times \frac{ L( 1/2, \pi_1 \otimes \pi_2 \otimes \pi_3 ) }{ \prod L( 1, \sym^2 \pi_i ) }.
\end{multline}

\end{theorem}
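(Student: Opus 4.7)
The statement is a direct specialization of the general triple product formula proved by Ichino to the case at hand. The plan is to begin from Ichino's identity in its adelic form, which expresses $|\int \phi_1 \phi_2 \phi_3 \, dx|^2$ as the product of a ratio of central $L$-values with a product of normalized local matrix coefficient integrals $I_v^*$, each chosen so that $I_v^* = 1$ at every place $v$ where all three $\pi_i$ are unramified and $\phi_{i,v}$ is the spherical vector.

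Because we realize our automorphic forms on $GL(2,\OO) Z \backslash GL(2,\C)$ with $F$ of class number one, the representations $\pi_i$ are unramified at every finite place of $F$, so the entire finite-place contribution to Ichino's product collapses to $1$ and only the single Archimedean place of $F$ survives. At that place the normalized matrix coefficient integral is the integral over $\overline{G} = PGL(2,\C)$ of $\prod_i \langle I_i(g) v_i, v_i \rangle \, d\overline{g}$, divided by the Archimedean $L$-factors $L_\infty(1/2, \pi_1 \otimes \pi_2 \otimes \pi_3) / \prod_i L_\infty(1, \pi_i, \Ad)$. Multiplying these Archimedean factors back into Ichino's ratio of finite-part $L$-values promotes it to the complete ratio $L(1/2, \pi_1 \otimes \pi_2 \otimes \pi_3) / \prod_i L(1, \sym^2 \pi_i)$ displayed in the theorem, once one uses the identity $L(s, \pi_i, \Ad) = L(s, \sym^2 \pi_i)$ valid since the $\pi_i$ have trivial central character.

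All remaining numerical prefactors -- the powers of $\zeta_F(2)$ appearing in Ichino's formula, the combinatorial constant $1/8$, and the discrepancy between Ichino's adelic Haar measures and the measures $dg$, $d\overline{g}$ fixed at the start of this section -- combine into a single constant $C$ depending only on the field $F$. The main obstacle, and essentially the only work, is the bookkeeping: one must verify that Ichino's normalization of $I_v^*$ at the Archimedean place matches the unnormalized matrix coefficient integral written in the statement, and track the Haar-measure and local $L$-factor conversions carefully enough to confirm that $C$ is indeed independent of the representations $\pi_i$. No new analytic input is required; the substance of the result lies entirely in Ichino's paper and in the unramified matrix coefficient computation of Macdonald type at the finite places, which produces the $\prod L(1, \sym^2 \pi_i)$ denominator as part of the global ratio.
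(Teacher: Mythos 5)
Your overall route is the one the paper itself takes: the paper offers no independent proof of this theorem, it simply specialises Ichino's formula to the everywhere-unramified, class-number-one setting, where every finite local factor is trivial and only the Archimedean matrix-coefficient integral survives, with measure constants and $\zeta_F(2)$-type factors absorbed into $C$ and $L(s,\pi_i,\Ad)=L(s,\sym^2\pi_i)$ used at the end. In that sense your plan is the intended one.

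However, the step you yourself single out as ``essentially the only work'' is stated backwards, and as written it would fail. Whichever form of Ichino's theorem you start from --- partial $L$-functions with unnormalised local integrals at the bad places, or completed $L$-functions with local integrals normalised by the local factors --- the specialisation must land on the ratio of \emph{finite-part} $L$-functions sitting next to the \emph{unnormalised} integral $\int_{\overline{G}}\prod_i\langle I_i(g)v_i,v_i\rangle\,d\overline{g}$: the normalised Archimedean integral is the unnormalised one multiplied by $\prod_i L_\infty(1,\pi_i,\Ad)\,/\,(\zeta_\infty(2)^2\,L_\infty(1/2,\pi_1\otimes\pi_2\otimes\pi_3))$, so when you re-express the formula in terms of the unnormalised integral these Archimedean factors cancel against those of the completed ratio rather than completing a finite-part one. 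Your proposal asserts the opposite conversion, namely that multiplying the Archimedean factors back in ``promotes'' the ratio to the complete one, leaving completed $L$-functions paired with the unnormalised integral. That pairing is off by $L_\infty(1/2,\pi_1\otimes\pi_2\otimes\pi_3)/\prod_i L_\infty(1,\pi_i,\Ad)$, which depends on the spectral parameters of the $\pi_i$ (it is of polynomial size in $r_n$), so it cannot be absorbed into a constant depending only on $F$; it would also garble the later analysis, where the Archimedean decay is carried entirely by the integral $\Ss\asymp r_n^{-2}$ while the finite $L$-value, of analytic conductor $r_n^8$, is the object subject to subconvexity in Theorem \ref{main}. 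Fix the direction of this normalisation and the rest of your argument stands.
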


If $u \in I'$ is the archimedean vector corresponding to $\phi$, we may apply Ichino's formula to (\ref{preprod}) to obtain

\begin{multline}
\label{tripleprod1}
| \nu_{n,i}(\phi) |^2 = C \int_{ \overline{G} } \langle I_n(g) \sigma( \delta_N ), \sigma( \delta_N ) \rangle \langle I_n(g) \psi_{m-2i}^*, \psi_{m-2i}^* \rangle \langle I'(g) u, u \rangle d\overline{g} \\
\times \frac{ L( 1/2, \pi_n \otimes \pi_n \otimes \pi' ) }{ L( 1, \sym^2 \pi_n )^2 L( 1, \sym^2 \pi' ) }.
\end{multline}

The gamma factors of this triple product $L$ function are

\begin{equation*}
L_\infty( \pi_n \otimes \pi_n \otimes \pi', s) = \prod_\pm \Gamma \left( s \pm ir_n \pm \tfrac{ir'}{2} + \tfrac{ |k|}{2} + \tfrac{ |k'|}{4} \right) \times \prod_\pm \Gamma \left( s \pm \tfrac{ir'}{2} + \tfrac{|k'|}{4} \right)^2.
\end{equation*}

We therefore see that the analytic conductor of $L( s, \pi_n \otimes \pi_n \otimes \pi' )$ behaves like $r_n^8$ in the eigenvalue aspect, so that (\ref{ldecaypi}) is a slight strengthening of the convex bound for this $L$ function.  To show that the decay of $\nu_{n,i}(\phi)$ follows from this we must show that the archimedean integral $\Ss$ appearing in (\ref{tripleprod1}) satsfies $\Ss \ll r_n^{-2}$ for all $i$ and $u$, while to deduce the decay (\ref{ldecaypi}) from $\nu_{i,n}(\phi) \rightarrow 0$ we need only show that $\Ss \gg r_n^{-2}$ for some choice of $i$ and $u$.  We shall do this in section \ref{archimedean} by transforming $\Ss$ to the corresponding integral which appears when $\phi$ is chosen to be an Eisenstein series, using a formula of Michel and Venkatesh \cite{MV}.

\subsection{The Eisenstein Case}

We now consider the case of $\phi$ an Eisenstein series.  Fix an integer $k'$, and for $t \in \C$ let $\chi_t : \C^\times \rightarrow \C^\times$ be the character

\begin{equation*}
\chi_t : z \mapsto (z / |z|)^{k'/2} |z|^{2it}.
\end{equation*}

As usual, set $s = 1/2 + it$.  Let $\pi'(s)$ be the representation of $G$ unitarily induced from the character

\begin{equation*}
\xi(s) : \left( \begin{array}{cc} z_1 & x \\ 0 & z_2 \end{array} \right) \mapsto \chi_t (z_1/z_2)
\end{equation*}

of the Borel.  We take a family of vectors $f(s)$ in the induced models for $\pi'(s)$ whose restrictions to $K$ are fixed, and let $E(s,g)$ be the associated Eisenstein series so that for $\text{Re}(s) > 1$,

\begin{equation*}
E(s,g) = \sum_{ \Gamma_\infty \backslash \Gamma} f(s)(\gamma g).
\end{equation*}

$\nu_{n,i}(E(s,\cdot))$ is equal to the integral

\begin{equation}
\label{eistriple}
\nu_{n,i}(E(s,\cdot)) = \int_X  \overline{ R_n( \delta_N )(g) } R_n( \psi_{m-2i}^* )(g) E(s,g) dg,
\end{equation}

and we shall calculate this by unfolding.  We let $\W_n$ be the Whittaker model of $\pi_n$ with respect to $\psi$, and we equip this model with the inner product

\begin{equation}
\label{whittakernorm}
\langle W_1, W_2 \rangle = \frac{1}{2\pi} \int_{\C^\times} \int_K W_1( a(y) k ) \overline{ W_2( a(y) k ) } dy^\times dk.
\end{equation}

Fix a unitary isomorphism between $I_n$ and $\W_n$, and let $W_{n,1}$ and $W_{n,2}$ correspond to $\psi_{m-2i}^*$ and $\delta_N$ so that $R_n( \psi_{m-2i}^* )$ and $R_n( \delta_N )$ have Fourier expansions

\begin{eqnarray*}
R_n( \psi_{m-2i}^* ) & = & \sum_{\xi \in \OO} a_{n,\xi} W_{n,1} ( a(\xi \kappa) g ) \\
R_n( \delta_N ) & = & \sum_{\xi \in \OO} a_{n,\xi} W_{n,2} ( a(\xi \kappa) g ).
\end{eqnarray*}

Here, $\kappa$ is a generator of the inverse different $\OO^*$ of $\OO$ and the Fourier coefficients $a_{n,\xi}$ satisfy

\begin{equation*}
a_{n,\xi} = a_{n,1} N\xi^{-1/2} \lambda_n(\xi)
\end{equation*}

where $\lambda_n$ are the automorphically normalised Hecke eigenvalues of $\pi_n$.  The $L^2$ normalisations of $R_n$ and $W_{n,i}$ imply that

\begin{equation*}
|a_{n,1}|^2 = \frac{4\pi}{ |D| L( 1, \sym^2 \pi_n )}.
\end{equation*}

Unfolding (\ref{eistriple}) for $\text{Re}(s) > 1$, we have

\begin{eqnarray*}
\nu_{n,i}(E(s,\cdot)) & =&  \int_{\Gamma_\infty \backslash \overline{G} }  R_n( \psi_{m-2i}^* )(g) \overline{ R_n( \delta_N )(g) } f(s)(g) dg \\
& = & \frac{1}{2\pi} \int_{ \C / \{ \pm 1 \} \OO } \int_{\C^\times} \int_K R_n( \psi_{m-2i}^* )( n(x) a(y) k) \overline{ R_n( \delta_N )( n(x) a(y) k) } \\
& \quad & \quad \quad f(s)( a(y) k) |y|^{-2} dx dy^\times dk.
\end{eqnarray*}

(Note that $\C / \{ \pm 1 \} \OO$ denotes the quotient of $\C \backslash \OO$ by multiplication by $-1$.)

\begin{eqnarray*}
\nu_{n,i}(E(s,\cdot)) & = & \frac{1}{4} \sqrt{|D|} \sum_{\xi \in \OO} | a_{n,\xi} |^2 \frac{1}{2\pi} \int_{\C^\times} \int_K W_{n,1}( a(\xi \kappa y) k ) \overline{ W_{n,2}( a(\xi \kappa y) k ) } \\
& \quad & \quad \quad f(s)( a(y) k) |y|^{-2} dy^\times dk \\
& = & \frac{ |a_{n,1}|^2 }{4} \sqrt{|D|} N\kappa^{1/2} \chi_t(\kappa)^{-1} \sum_{\xi \in \OO} N\xi^{-1/2} \chi_t(\xi)^{-1} | \lambda_n(\xi) |^2 \\
& \quad & \quad \quad \frac{1}{2\pi} \int_{\C^\times} \int_K W_{n,1}( a( y) k ) \overline{ W_{n,2}( a( y) k ) } f(s)( a( y) k) |y|^{-2} dy^\times dk \\
& = & \frac{ |a_{n,1}|^2 }{4} \chi_t(\kappa)^{-1} \frac{ L( 1/2, \pi_n \otimes \pi_n \otimes \chi^{-1}_t ) }{ L( \chi_t^{-2}, 1) } \\
& \quad & \quad \quad \frac{1}{2\pi} \int_{\C^\times} \int_K W_{n,1}( a( y) k ) \overline{ W_{n,2}( a( y) k ) } f(s)( a( y) k) |y|^{-2} dy^\times dk.
\end{eqnarray*}

Applying the normalisations of $|a_{n,1}|^2 $, this becomes

\begin{multline}
\label{tripleprod3}
\nu_{n,i}(E(s,\cdot)) = \frac{ \pi \chi_t(\kappa)^{-1} }{ |D| L( \chi_t^{-2}, 1) } \frac{ L( 1/2, \pi_n \otimes \pi_n \otimes \chi_t^{-1} ) }{ L( 1, \sym^2 \pi_n )}  \\
\times \frac{1}{2\pi} \int_{\C^\times} \int_K W_{n,1}( a(y) k ) \overline{ W_{n,2}( a(y) k ) } f( a(y) k) y^{-2} dy^\times dk.
\end{multline}

The gamma factors of the triple product $L$ function occurring here are

\begin{equation*}
L_\infty( s, \pi_n \otimes \pi_n \otimes \chi_t^{-1} ) = \Gamma \left( s + it + \tfrac{|k'|}{4} \right)^2 \prod_\pm \Gamma \left( s \pm ir_n + it + \tfrac{ |k|}{2} + \tfrac{ |k'|}{4} \right),
\end{equation*}

so that its analytic conductor behaves like $r_n^4$ in the eigenvalue aspect and the decay (\ref{ldecaychi}) represents a small saving over the convex bound for $L$.  If we denote the archimedean integral occurring in (\ref{tripleprod3}) by $\Tt$, the equivalence of $\nu_{n,i}(E(s,\cdot)) \rightarrow 0$ with (\ref{ldecaychi}) is implied by an asymptotic $\Tt \sim r_n^{-1}$ as in the cuspidal case.  In the next section we state a relation between $\Ss$ and $\Tt$ which shows that the asymptotics that we wish to prove for them are equivalent, and we shall establish both by calculating $\Tt$.

\section{Archimedean Integrals}
\label{archimedean}

There is a simple relation between $\Ss$ and $\Tt$ due to Michel and Venkatesh \cite{MV} which will be of great use to us.  To state it, let $v_i \in I_i$ be three vectors in representations of $GL(2,\C)$ with trivial central character.  Let $\W_1$ and $\W_2$ be the Whittaker models for $I_1$ and $I_2$ corresponding to $\psi$ and $\overline{ \psi}$, and let $\Ii_3$ be the induced model of $I_3$.  We equip $\W_i$ with the inner product

\begin{equation}
\label{whittakernorm2}
\langle W_1, W_2 \rangle = \int_{\R^+} W_1( a(y) ) \overline{ W_2( a(y) ) } dy^\times,
\end{equation}

(which is equal to the one defined in (\ref{whittakernorm}) ), and $\Ii_3$ with the inner product

\begin{equation*}
\langle f, f \rangle = \int_K | f(k) |^2 dk.
\end{equation*}

Fix unitary equivalences between $I_i$ and their respective models, under which $v_i$ correspond to $W_1$, $W_2$ and $f_3$.  Michel and Venkatesh then prove

\begin{prop}
\label{MVenkatesh}

\begin{equation}
\label{equivalence}
\int_{ \overline{G} } \langle g v_1, v_1 \rangle \langle g v_2, v_2 \rangle \langle g v_3, v_3 \rangle d\overline{g} = \frac{1}{4\pi} \left| \int_{\R^+} \int_K W_1( a(y) k ) W_2( a(y) k ) f_3( a(y) k) y^{-2} dy^\times dk \right|^2
\end{equation}

\end{prop}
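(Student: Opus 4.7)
The plan is to recognise both sides of (\ref{equivalence}) as $G$-invariant sesquilinear forms on $I_1 \otimes I_2 \otimes I_3$, and to appeal to the uniqueness of invariant trilinear forms to conclude that they agree up to an absolute constant which can then be pinned down by a test calculation. First I would verify that the expression
$$\ell(v_1, v_2, v_3) := \int_{\R^+} \int_K W_1(a(y)k) W_2(a(y)k) f_3(a(y)k) y^{-2} dy^\times dk$$
defines a $G$-invariant trilinear form on $I_1 \otimes I_2 \otimes I_3$. Convergence follows from standard decay of Whittaker functions at infinity together with the boundedness of $f_3$ on $K$. For invariance, the key point is that under the Iwasawa decomposition $G = NAK$ the integrand is left-$N$-invariant: $W_1$ transforms under $\psi$, $W_2$ under $\overline{\psi}$, and $f_3$ is left-$N$-invariant in the induced model. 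The product therefore descends to a function on $N \backslash G$, on which $y^{-2} dy^\times dk$ is the pushforward of our Haar measure, and right-$G$-invariance is automatic.

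Next I would observe that the LHS of (\ref{equivalence}), viewed as a function of the vectors, polarises to a positive $G$-invariant Hermitian form $B$ on $I_1 \otimes I_2 \otimes I_3$ under the diagonal action. By the classical uniqueness theorem of Oksak and Prasad, $\Hom_G(I_1 \otimes I_2 \otimes I_3, \C)$ is at most one-dimensional for a generic triple of irreducible admissible representations of $GL_2(\C)$. The general Ichino--Ikeda-type mechanism — applicable here because the triple of tempered principal series makes the matrix coefficient integral over $\overline G$ converge — forces the form $B$ to factor as a sum of $|\ell_\alpha|^2$ over a basis of invariant trilinear forms, which in the one-dimensional case collapses to a single term. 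Combining this with Step 1, we obtain
$$\int_{\overline G} \prod_i \langle g v_i, v_i \rangle d\overline g = c \, |\ell(v_1, v_2, v_3)|^2$$
for some absolute constant $c \ge 0$ independent of the vectors.

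It then remains to identify $c = 1/(4\pi)$. The plan is to evaluate both sides on a convenient family of test vectors, for instance vectors whose images in the Kirillov model are sharply concentrated near a single point of $\R^+ \cdot K$, so that $\ell$ localises and can be computed in closed form, while the matrix coefficient integral on the LHS can be unfolded via Iwasawa to the same localised region. Tracking the Haar measure normalisations on $Z$, $A$, $N$, $K$ and $\overline G$ fixed in section \ref{tripleprod}, together with the inner product (\ref{whittakernorm2}) on $\W_i$ and the $K$-inner product on $\Ii_3$, should produce the factor $1/(4\pi)$. I expect the main obstacle to be precisely this constant determination: the uniqueness argument gives proportionality essentially for free, but extracting the exact factor requires careful bookkeeping across the Whittaker, Kirillov, and induced models. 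A secondary difficulty is justifying the abstract Ichino--Ikeda decomposition of $B$ in the tempered but non-$L^2$ principal series setting, which requires some care about convergence of matrix coefficient integrals.
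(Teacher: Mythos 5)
You should note first that the paper does not prove Proposition \ref{MVenkatesh} itself: it quotes it from Michel and Venkatesh \cite{MV}, whose argument is a direct unfolding. One writes the matrix coefficient of the induced representation $I_3$ as an integral over $K$ of $f_3(kg)\overline{f_3(k)}$, unfolds the integral over $\overline{G}$ along the Iwasawa decomposition, and applies the inner product formula in the Whittaker model twice; the square on the right-hand side of (\ref{equivalence}) and the constant $1/(4\pi)$ fall out of this computation together with the fixed measure normalisations. Your route --- multiplicity one for trilinear forms plus an Ichino--Ikeda type factorisation $B = c\,|\ell|^{2}$, followed by a test-vector evaluation of $c$ --- is genuinely different, and its first two steps are reasonable: the matrix coefficient integral does converge for a tempered triple on $\overline{G}$, the Whittaker-side integrand is left $N$-invariant for the reason you give, and the archimedean uniqueness theorem (Oksak, Loke) does apply to principal series of $GL(2,\C)$.

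The genuine gap is in the final step, and it is not merely bookkeeping. Uniqueness gives you, for each fixed triple $(I_1,I_2,I_3)$, a constant $c = c(I_1,I_2,I_3) \ge 0$ independent of the vectors; the content of the proposition is that, with these particular normalisations of the Whittaker and induced-model inner products, $c$ equals $1/(4\pi)$ uniformly in the spectral parameters and weights. Your argument gives no control on that dependence, and the proposed test vectors do not obviously supply it: vectors sharply localised in the Kirillov model are not $K$-finite, and localising $\ell$ does nothing to localise the left-hand side, which is an integral over all of $\overline{G}$ of a product of matrix coefficients; so the ``closed form'' evaluation would have to be carried out for every triple of representations, at which point you are essentially redoing the unfolding computation that constitutes the direct proof. (Two secondary points also need justification rather than assertion: the positive semi-definiteness of the polarised form $B$ and the validity of the Ichino--Ikeda factorisation in this archimedean, non-square-integrable setting, and the continuity of the functionals so that the multiplicity-one theorem applies to them.) To close the gap you would either perform the unfolding directly, as in \cite{MV}, or supplement uniqueness with an argument --- for instance analytic continuation of $c$ in the principal series parameters combined with a single explicit evaluation --- showing that $c$ does not vary with the representations.
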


In other words, our two integrals satisfy $\Ss = | \Tt |^2$, and we shall prove asymptotics for $\Tt$ using the formulas for $W_i$ given in Jacquet-Langlands \cite{JL} which we recall below.  Note that the upper bounds we require may also be proven quite easily by applying stationary phase to $\Ss$.

\subsection{Whittaker Functions}

Let $I$ be a representation of $GL(2,\C)$ with trivial central character, weight $k$ and spectral parameter $r$.  We will break with our established normalisation of these parameters and assume that $k \ge 0$ while $r$ may be negative.  Let $\W$ be the Whittaker model of $I$ with respect to $\psi$, and for a fixed $m \ge k$ let $W : \rho_m \rightarrow \W$ be the corresponding embedding of the $K$-representation $\rho_m$.  We shall use a formula for $W$ which is a simplification of the one given in Jacquet-Langlands \cite{JL}.  For $w \in \{ -m, -m+2, \ldots, m \}$, define

\begin{equation}
\label{whittakerV}
V_w(y) = y^{k/2+1} \sum_{(p,q) \in \Ai} C_{p,q} y^{p+q} K_{-ir +p -q -w/2}(4\pi y),
\end{equation}

where $C_{p,q}$ are nonzero constants depending only on $k$ and $m$ and $\Ai$ is the set of $(p,q)$ satisfying

\begin{eqnarray}
\label{whittakerindex}
p \ge 0, \quad q \ge 0, \quad p \ge w/2 - k/2, \quad q \ge -w/2 - k/2, \quad (m-k)/2 \ge p+q.
\end{eqnarray}

If we let

\begin{equation}
{\bf W}(y) = \sum_{j=0}^m V_{m-2j}(y) v_{m-2j},
\end{equation}

the embedding $W$ is given by

\begin{equation}
\label{whittakeremb}
W(a(y)k)(v) = C \langle \rho(k) v, {\bf W}(y) \rangle
\end{equation}

for a constant $C$ required to make $W$ unitary.  The integral $\Tt$ is therefore expressed in terms of integrals of two Bessel functions and a power of $y$, and may be calculated using the following formula of Gradshteyn and Ryzhik \cite{GR}:

\begin{equation}
\label{bessel}
\int_0^\infty y^\lambda K_\mu(y) K_\nu(y) dy = \frac{ 2^{\lambda-2} }{ \Gamma( \lambda+1 ) } \prod_\pm \Gamma \left( \frac{ 1 + \lambda \pm \mu \pm \nu }{2} \right).
\end{equation}

We shall make use of two cases in which exact form of (\ref{whittakerV}) is very simple.  If $w = m$ we have 

\begin{equation}
\label{whittakersimple1}
V_m(y) = y^{m/2+1} K_{- k/2 - ir}(4\pi y),
\end{equation}

and if $k = m$

\begin{equation}
\label{whittakersimple2}
{\bf W}(y) = C y^{k/2+1} \sum_{j=0}^k \tbinom{k}{j}^{1/2} K_{-ir - (k-2j)/2}(4\pi y) v_{k-2j}.
\end{equation}

We may use (\ref{whittakersimple1}) to calculate the constant $C$.

\begin{eqnarray*}
\langle W(v_m), W(v_m) \rangle & = & \int_0^\infty y^{m+2} | K_{- ir - k/2}(4\pi y) |^2 dy^\times \\
& = & (4\pi)^{-m -2} \int_0^\infty y^{m+1} K_{-k/2 + ir}(y) K_{-k/2 - ir}(y) dy \\
& = & (4\pi)^{-m -2} \frac{ 2^{m-1} }{ \Gamma( m + 2) } \Gamma(1 + \tfrac{m}{2} \pm \tfrac{k}{2} ) \Gamma \left( 1 + \tfrac{m}{2} \pm ir \right) \\
& = &  \frac{ \Gamma(1 + \tfrac{m}{2} \pm \tfrac{k}{2} ) \Gamma \left( 1 + \tfrac{m}{2} \pm ir \right) }{ 8\Gamma( m + 2) (2\pi)^{m +2} }.
\end{eqnarray*}

Therefore, ignoring absolute constants,

\begin{equation}
\label{whittakerunitary}
C = (2\pi)^{m/2} \frac{ \Gamma(m+2)^{1/2} }{ \Gamma(1 + \tfrac{m}{2} \pm \tfrac{k}{2} )^{1/2} } | \Gamma \left( 1 + \tfrac{m}{2} + ir \right) |^{-1}.
\end{equation}

Substituting this into (\ref{whittakersimple2}) when $k=m$, we have

\begin{eqnarray*}
{\bf W}(y) = C y^{k/2+1} \sum_{j=0}^k \tbinom{k}{j}^{1/2} K_{-ir - (k-2j)/2}(4\pi y) v_{k-2j}, \\
\text{where} \quad C = (2\pi)^{k/2} (k+1)^{1/2} | \Gamma( 1 + \tfrac{k}{2} + ir)|^{-1}.
\end{eqnarray*}

\subsection{Computation in the Eigenvalue Aspect}
\label{archeval}

In either the cuspidal or Eisenstein case, the matrix coefficient form of the integral to be evaluated is

\begin{equation*}
\Ss = \int_{ \overline{G} } \langle I_n(g) \sigma( \delta_N ), \sigma( \delta_N ) \rangle \langle I_n(g) \psi_{m-2i}^*, \psi_{m-2i}^* \rangle \langle I'(g) u, u \rangle d\overline{g}.
\end{equation*}

As (\ref{equivalence}) ensures that this integral is real, we may take its complex conjugate and use the relations $\overline{ \langle u, v \rangle } = \langle \sigma(u), \sigma(v) \rangle$ and $\sigma( \psi_{m-2i}^* ) = z \psi_{2i-m}^*$ ($|z| = 1$) to rewrite it as 

\begin{equation}
\label{tripleprod2}
\Ss = \int_{ \overline{G} } \langle I_n(g) \delta_N, \delta_N \rangle \langle I_n(g) \psi_{2i-m}^*, \psi_{2i-m}^* \rangle \langle I'(g) \sigma(u), \sigma(u) \rangle d\overline{g}.
\end{equation}

We shall replace $\psi_{2i-m}^* \in \rho_m^*$ with $v_{m-2i} \in \rho_m$, so that we can forget about taking duals of $\rho_m$, and let $w_1 = m-2i$ be its $M$-weight.  We may also assume that $u$ is an eigenvector for $M$ of weight $w_2$ in an irreducible representation $\rho_{m'}$ of $K$.  We transform (\ref{tripleprod2}) to an integral of Whittaker functions using proposition \ref{MVenkatesh} by transferring $v_{m-2i}$ and $\sigma(u)$ to their Whittaker models and $\delta_N$ to its induced model.  The induced vector corresponding to $\delta_N$ is

\begin{equation*}
f( n a(y) k ) = y^{1+ir} \delta_N(k),
\end{equation*}

and we let $W_1$ be the Whittaker embedding of $\rho_m$ in $\W(I_n, \psi)$ (we shall ignore dependencies on $n$ for the rest of the section).  Because the Whittaker function of $\sigma(u)$ with respect to $\overline{\psi}$ is equal to the conjuagate of the Whittaker function of $u$ with respect to $\psi$, we may let $W_2$ be the Whittaker embedding of $\rho_{m'}$ in $\W(I', \psi)$ so that we are left with calculating $\Tt = \int W_1(v_{m-2i}) \overline{W_2(u)} f$.  We now substitute the formula (\ref{whittakeremb}) into this integral, letting $C_1$ and $C_2$ be the appropriate constants of unitary normalisation.

\begin{eqnarray*}
\Tt & = & C_1 C_2 \int_0^\infty \int_K W_1( a(y) k)(v_{m-2i}) \overline{ W_2( a(y) k)(u)} f( a(y) k) y^{-2} dy^\times dk \\
& = & C_1 C_2 \int_0^\infty \int_K \langle \rho(k) v_{m-2i}, {\bf W}_1(y) \rangle \overline{ \langle \rho(k) u, {\bf W}_2(y) \rangle} \delta_N(k) y^{-1 + ir} dy^\times dk.
\end{eqnarray*}

As we are only interested in the asymptotic behaviour of $\Tt$ as $r \rightarrow \infty$, we may ignore all factors which are independent of it, and write $\propto$ to indicate that two quantities are proportional with a contant whose absolute value is independent of $r$.  $\delta_N(k)$ has weight $k$ under the right action of $M$, so for this integral to be nonzero we must have $w_1 - w_2 +k = 0$.  It also has weight $k$ under the left $M$ action, and its integral against functions $h(k)$ on $K$ of left weight $-k$ is $h(e)$.  Because the other term in the integral has right weight $-k$, all its components with left weight other than $-k$ must vanish at the identity.  Therefore the inner integral in $K$ reduces to evaluation of the first two terms at the identity, and our formula simplifies to

\begin{eqnarray*}
\Tt & = & C_1 C_2 \int_0^\infty \langle v_{m-2i}, {\bf W}_1(y) \rangle \overline{ \langle u, {\bf W}_2(y) \rangle} y^{-1 + ir} dy^\times \\
& = & C_1 C_2 \int_0^\infty \overline{ V_1(y) } V_2(y) y^{-1 + ir} dy^\times
\end{eqnarray*}

where $V_1$ and $V_2$ are short for $V_{1,w_1}$ and $V_{2,w_2}$.  We now expand this integral using the sums for $V_1$ and $V_2$, and consider the behaviour of each term seperately.  The two sums are

\begin{eqnarray*}
V_1(y) & = & y^{k/2+1} \sum_{(p,q) \in \Ai} C_{p,q} y^{p+q} K_{-ir +p -q -w_1/2}(4\pi y), \\
V_2(y) & = & y^{k'/2+1} \sum_{(p',q') \in \Bi} C_{p',q'} y^{p'+q'} K_{-ir' +p' -q' -w_2/2}(4\pi y),
\end{eqnarray*}

where $\Ai$ and $\Bi$ are as in (\ref{whittakerindex}).  Ignoring constant factors, the term in our integral corresponding to a given $p$, $q$, $p'$ and $q'$ is

\begin{eqnarray*}
C_1 C_2 \int_0^\infty y^{a+ir} K_{b+ir}(4\pi y) K_c(4\pi y) dy^\times 
& = & C_1 C_2 \frac{ \Gamma( \tfrac{a -b \pm c}{2} ) \Gamma( \tfrac{a +b \pm c}{2} + ir ) }{ 8 \Gamma(a + ir) (2\pi)^{a+ir} } \\
& \propto & \frac{ \Gamma( (a +b \pm c)/2 + ir ) }{ \Gamma(a + ir) |\Gamma( 1 + \tfrac{m}{2} + ir)| },
\end{eqnarray*}

\begin{eqnarray*}
\text{where} \quad a & = & 1 + (k+k')/2 + p + q + p' + q', \\
b & =&  p- q- w_1, \\
\text{and} \quad c & = & -ir' + p' - q' - w_2.
\end{eqnarray*}

By Stirling's formula, the asymptotic behaviour of this expression as $|r| \rightarrow \infty$ will be $|r|^\sigma$, where $\sigma$ is the sum of the real parts of the arguments of the gamma function occurring in the numerator minus those in the denominator.  This is $b - m/2 - 1$, or $p -q -w_1 -m/2 -1$, and we wish to show that this is $\le -1$.  Adding the third and fourth constraints for $p$ and $q$ in (\ref{whittakerindex}) gives $m/2 + w_1/2 \ge p$, so

\begin{eqnarray}
\label{indexineq}
p - q -w_1/2 - m/2 - 1 & \le & m/2 + w_1/2 - w_1/2 - m/2 -1 \\
& = & -1,
\end{eqnarray}

and $\Tt \ll r^{-1}$ as required.  To establish the lower bound, we begin by determining those $p$ and $q$ for which equality can hold in (\ref{indexineq}).  We must have $q = 0$ and $p = m/2 + w_1/2$, so that the fourth and fifth inequalities of (\ref{whittakerindex}) become

\begin{eqnarray*}
0 & \ge & -w_1/2 - k/2 \\
w_1 & \ge & -k, \\
\text{and} \quad (m-k)/2 & \ge & m/2 + w_1/2 \\
-k & \ge & w_1.
\end{eqnarray*}

Therefore the only equality case is when $w_1 = -k$, $p = (m-k)/2$ and $q = 0$.  To exhibit the asymptotic lower bound, we may take $u \in I'$ to be the vector of minimal $K$-type and $w_2 = 0$.  Then $V_2 = V_{2,0}$ is $y^{k'/2+1} K_{-ir}(4\pi y)$, and up to factors independent of $r$ the expression for $\Tt$ is

\begin{equation*}
\Tt \propto |\Gamma( 1 + \tfrac{m}{2} + ir)|^{-1} \int_0^\infty \overline{V_1(y)} y^{k'/2 + 2 + ir} K_{-ir}(4\pi y) dy^\times.
\end{equation*}

We have shown that all terms other than $(p,q) = ( (m-k)/2, 0)$ in $V_1$ make a contribution of $O(r^{-2})$ to this integral, while $( (m-k)/2, 0)$ makes a contribution asymptotic to $r^{-1}$ because the expression for $V_2$ only contains one term.  Therefore $\Tt \gg r^{-1}$ for this choice of $\psi^*_{m-2i}$ and $u$, which concludes the proof of theorem \ref{main}.

\subsection{Computation in the Weight Aspect}
\label{tripleweight}

We finish this chapter by computing two triple product integrals which we will need for a paper on QUE in the weight aspect \cite{Ma}.  Let $I$ have weight $k \ge 0$ and spectral parameter 0, and $v_{\pm k} \in \rho_k$ be the two vectors of minimal $K$-type and extremal weight in $I$.  Let $I'$ be spherical with spectral parameter $r'$, and let $u$ be the unit $K$-fixed vector.  The first integral we wish to calculate is

\begin{equation*}
\Ss = \int_{ \overline{G} } \langle g v_k, v_k \rangle \langle g v_{-k}, v_{-k} \rangle \langle g u, u \rangle d\overline{g}.
\end{equation*}

We transfer $v_k$ to the function $f$ in its induced model, and let $W_1$ and $W_2$ be the Whittaker embeddings of $u$ and $\rho_k$ in $\W( I, \psi)$ and $\W( I', \psi)$ respectively, so that $\Ss$ is determined by the Whittaker integral $\Tt_1$ given by

\begin{equation*}
\Tt_1 = \int_0^\infty \int_K W_1( a(y)k )(u) \overline{ W_2( a(y)k )( v_k) } f( a(y)k ) y^{-2} dy^\times dk.
\end{equation*}

(Note that we are conjugating $W_2$ and replacing $v_{-k}$ with $\sigma( v_{-k} ) \simeq v_k$ as before).  $f$ is given by

\begin{equation*}
f( a(y)k ) = (k+1)^{1/2} y \langle \rho(k) v_k, v_k \rangle,
\end{equation*}

and by the formulas (\ref{whittakerV}) and (\ref{whittakerunitary}) $W_1$ and $W_2$ are

\begin{eqnarray*}
W_1( a(y) k) & = & | \Gamma( 1 + ir') |^{-1} y K_{ir'}(4\pi y), \\
W_2( a(y)k )(v) & = & \frac{ (2\pi)^{k/2} (k+1)^{1/2} }{ \Gamma( 1 + \tfrac{k}{2} ) } \langle \rho(k) v, {\bf W}_2(y) \rangle, \\
\text{where} \quad {\bf W}_2(y) & = & y^{k/2+1} \sum_{i=0}^k \tbinom{k}{i}^{1/2} K_{k/2-i}(4\pi y) v_{k-2i}.
\end{eqnarray*}

Substituting these into $\Tt_1$ gives

\begin{equation*}
\Tt_1 = \frac{ (2\pi)^{k/2} (k+1) }{ \Gamma( 1 + \tfrac{k}{2} ) | \Gamma(1+ir')| } \int_0^\infty \int_K y K_{ir'}(4\pi y) \overline{ \langle \rho(k) v_k, {\bf W}_2(y) \rangle } y \langle \rho(k) v_k, v_k \rangle y^{-2} dy^\times dk.
\end{equation*}

We may perform the integral over $K$ using the inner product formula for matrix coefficients, which gives

\begin{eqnarray*}
\Tt_1 & = & \frac{ (2\pi)^{k/2} }{ \Gamma( 1 + \tfrac{k}{2} ) | \Gamma(1+ir')| } \int_0^\infty  y^{k/2+1} K_{ir'}(4\pi y) K_{k/2}(4\pi y) dy^\times \\
& = & \frac{ (2\pi)^{k/2} }{ \Gamma( 1 + \tfrac{k}{2} ) | \Gamma(1+ir')| } (4\pi)^{-k/2-1} \frac{ 2^{k/2-2} }{ \Gamma(1 + \tfrac{k}{2} ) } \prod_\pm \Gamma \left( \tfrac{ 1 + k/2 \pm k/2 \pm ir'}{2} \right) \\
& = & \frac{ \Gamma \left( \tfrac{ 1 + k \pm ir'}{2} \right) \Gamma \left( \tfrac{ 1 \pm ir'}{2} \right) }{ \Gamma( 1 + \tfrac{k}{2} )^2 | \Gamma(1+ir')| }
\end{eqnarray*}

(ignoring the constant factor in the last step).  The second integral we shall require is

\begin{equation*}
\Tt_2 = \frac{ (2\pi)^k }{ \Gamma( k/2+1) } \int_0^\infty y^{1+ir'} |W_2(y)|^2 y^{-2} dy^\times,
\end{equation*}

and we may obtain this easily from the calculation above.  We first apply the identity

\begin{equation*}
|W_2(y)|^2 = (k+1) \int_K | \langle \rho(k) v_k, W_2(y) \rangle |^2 dk,
\end{equation*}

so that $\Tt_2$ becomes

\begin{eqnarray*}
\Tt_2 & = & \frac{ (2\pi)^k (k+1) }{ \Gamma( k/2+1) } \int_0^\infty \int_K y^{1+ir'} | \langle \rho(k) v_k, W_2(y) \rangle |^2 y^{-2} dy^\times dk \\
& = & \int_0^\infty \int_K y^{1+ir'} W_2( a(y)k )( v_k) \overline{ W_2( a(y)k )( v_k) } y^{-2} dy^\times dk.
\end{eqnarray*}

The three vectors ocurring here have norm one in their respective models, and when we convert the integral to its matrix coefficient form we obtain $\Ss$.  Therefore the absolute value of $\Tt_2$ is the same as that of $\Tt_1$.

\section{Geometric Interpretations of QUE}
\label{modQUE}

Many differential geometric objects on $Y$, such as tensors and sections of certain local systems, correspond to sections of the $K$-principal bundles with which we have been working, and in this section we shall describe these correspondences and the associated reinterpretations of conjecture \ref{nonsphQUE}.  In the case of vector fields and differential forms, if $p \in \HH^3$ corresponds to the identity under the isomorphism $G / K \simeq \HH^3$, there is a natural identification of $\p$ with $T \HH^3_p$ which may be extended by left equivariance to a map $\tilde{\phi} : G \times \p \rightarrow T\HH^3$.  It may be checked that $\tilde{\phi}$ factors through the principal action (\ref{prin}) of $K$, and hence descends to a $G$-equivariant map $\phi : G \times_K \p \rightarrow T\HH^3$ which is in fact an isomorphism.  Taking quotients on the left by $\Gamma$ gives an isomorphism $X \times_K \p \simeq TY$.  We therefore have $TY \simeq T^*Y \simeq X \times_K \rho_2$, and so all scalar valued tensors on $Y$ may be viewed as sections of $K$-principal bundles.

In the case of 1-forms, exact Laplace 1-forms correspond to automorphic representations $\pi$ of weight 0, and are of the form $df$ where $f \in \pi$ is the $K$-fixed vector.  Our conjecture then implies that quantum limits of exact forms are dual to the endomorphism of $\pi^*( T^*Y )$ whose value at $(x,v) \in S^*Y$ is the projection onto $v$.  Coclosed forms correspond to representations of weight $\pm 2$, and their quantum limits should be dual to the operator which at $(x,v)$ projects onto the vector $u \in v^\perp$ which is an eigenvector of weight $\mp 2$ for $M$.

The local systems which are isomorphic to principal bundles are those obtained by restricting a representation $\tau$ of $G$ to $\Gamma$, and will be denoted $V_\tau$.  The isomorphism may be seen by pulling  $V_\tau$ back to $\widetilde{V}_\tau$ on $X$, which is trivial as may be seen from the map

\begin{eqnarray}
\notag
\phi: G \times \tau & \rightarrow & G \times \tau \\
\label{twist}
(g, v) & \mapsto & (g, \tau(g^{-1}) v ).
\end{eqnarray}

This conjugates the $\Gamma$ action $\gamma : (g, v) \mapsto ( \gamma g, \tau(\gamma) g)$ to one which is trivial on $\tau$, and so gives an isomorphism $\widetilde{V}_\tau \simeq X \times \tau$.  $V_\tau$ is the quotient of $\widetilde{V}_\tau$ by the right action of $K$ on $G$, and this is conjugated under (\ref{twist}) to the principal action of $K$ on $X \times \tau$ so that $V_\tau \simeq X \times_K \tau$.  We may use this isomorphism to equip $V_\tau$ with a canonical positive definite inner product $\langle \;, \; \rangle$, by choosing an inner product on $\tau$ with respect to which the actions of all vectors in $\mathfrak{k}$ are skew-Hermitian and all vectors in $\p$ are Hermitian (such a product is unique up to scaling).  It is proven in \cite{BW} that with this choice of inner product, automorphic sections of $V_\tau$ are also eigensections with respect to the Laplacian induced by $\langle \;, \; \rangle$, and so we may use the isomorphism $V_\tau \simeq X \times_K \tau$ to apply our microlocal lift construction to Laplce eigensections of $V_\tau$.  If we take a series of Laplace eigensections corresponding to a fixed weight and irreducible $K$-summand of $\tau$, we then see that the expected quantum limit of these sections will be dual to the projection onto a certain $M$-eigenvector $v \in \tau$ in the bundle $X \times_M \tau$.

\end{document}